\numberwithin{equation}{section}
\theoremstyle{plain}
\newtheorem{theorem}{Theorem}[section]
\newtheorem{lemma}{Lemma}[section]
\newtheorem{propos}{Proposition}[section]
\theoremstyle{definition}
\newtheorem{definition}{Definition}
\newtheorem{remark}{Remark}[section]
\begin{document}

\title{Laplacians on smooth distributions}
\author[Yuri A.~Kordyukov]{Yuri A.~Kordyukov}
\address{Institute of mathematics, Ufa Scientific Center, Russian Academy of Sciences}
\email{yurikor@matem.anrb.ru}


\begin{abstract}
Let $M$ be a compact smooth manifold equipped with a positive smooth
density $\mu$ and $H$ be a smooth distribution endowed with a
fiberwise inner product $g$. We define the Laplacian $\Delta_H$
associated with $(H,\mu,g)$ and prove that it gives rise to an unbounded
self-adjoint operator in $L^2(M,\mu)$. Then, assuming that $H$ generates a singular foliation $\mathcal F$, we prove that, for
any function $\varphi$ from the Schwartz space $\mathcal S(\mathbb R)$, the operator $\varphi(\Delta_H)$ is a smoothing operator in the scale of
longitudinal Sobolev spaces associated with $\mathcal F$. The proofs are based on pseudodifferential calculus on singular foliations developed by Androulidakis and Skandalis and subelliptic estimates for $\Delta_H$.
\end{abstract}

\footnotetext[0]{Supported by the Russian Foundation of Basic Research  (grant~16-01-00312).}
\maketitle


\section{Introduction}\label{s:intro}
The main purpose of the paper is to define and study some natural
geometric differential operators associated with an arbitrary smooth
distribution on a compact manifold.
Let $M$ be a connected compact smooth manifold of dimension $n$
equipped with a positive smooth density $\mu$. Let $H$ be a smooth
rank $p$ distribution on $M$ (that is, $H$ is a smooth subbundle of
the tangent bundle $TM$ of $M$) and $g$ be a smooth fiberwise inner
product on $H$. We define the horizontal differential $d_Hf$ of a function
$f\in C^\infty(M)$ to be the restriction of its differential $df$ to
$H\subset TM$. Thus, $d_Hf$ is a section of the dual bundle $H^*$ of
$H$, $d_Hf\in C^\infty(M,H^*)$, and we get a first order
differential operator $d_H : C^\infty(M) \to C^\infty(M,H^*)$. The
Riemannian metric $g$ and the positive smooth density $\mu$ induce
inner products in $C^\infty(M)$ and $C^\infty(M,H^*)$, that allows
us to consider the adjoint $d^*_H : C^\infty(M,H^*)\to C^\infty(M)$
of $d_H$. Finally, the Laplacian $\Delta_H$ associated with
$(H,g,\mu)$ is the second order differential operator on
$C^\infty(M)$ given by
\[
\Delta_H=d^*_Hd_H.
\]
If $X_k, k=1,\ldots,p,$ is a local orthonormal frame in $H$ defined
on an open subset $\Omega\subset M$, then one can easily check that the
restriction of $\Delta_H$ to $\Omega$ is given by
\[
\Delta_H\left|_\Omega\right.=\sum_{k=1}^p X^*_kX_k.
\]
The next theorem allows us to talk about spectral properties of the operator $\Delta_H$.

\begin{theorem}\label{mainthm}
The Laplacian $\Delta_H$ considered as an unbounded
operator in the Hilbert space $L^2(M,\mu)$ with domain
$C^\infty(M)$ is essentially self-adjoint.
\end{theorem}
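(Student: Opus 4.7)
The plan is to deduce the theorem from Chernoff's classical essential self-adjointness theorem for powers of first-order symmetric differential operators on complete Riemannian manifolds. Chernoff's result asserts that if $D$ is a first-order symmetric differential operator on a Hermitian vector bundle $E$ over a complete Riemannian manifold with uniformly bounded principal symbol, then $D^k$ is essentially self-adjoint on $C_c^\infty(M,E)$ for every $k\ge 1$. Since $M$ is compact (equipped with any auxiliary Riemannian metric), all of these hypotheses are automatic.

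I would apply this to the first-order symmetric operator
\[
D := \begin{pmatrix} 0 & d_H^* \\ d_H & 0 \end{pmatrix}
\]
acting on $C^\infty(M) \oplus C^\infty(M,H^*)$, with the $L^2$-inner product induced by $\mu$ and $g$. The identity $(d_H u,\alpha) = (u,d_H^*\alpha)$ for $u \in C^\infty(M)$, $\alpha \in C^\infty(M,H^*)$, which is the defining property of $d_H^*$, is precisely the statement that $D$ is symmetric. Chernoff's theorem then yields essential self-adjointness of $D^2$ on the same domain. But $D^2$ is block-diagonal with diagonal entries $d_H^* d_H = \Delta_H$ and $d_H d_H^*$, and a diagonal operator on a direct sum of Hilbert spaces is essentially self-adjoint if and only if each of its diagonal blocks is; the essential self-adjointness of $\Delta_H$ on $C^\infty(M)$ follows immediately.

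The main technical point is just the bookkeeping to verify the hypotheses of Chernoff: one has to choose an auxiliary Riemannian metric on $M$ to supply the framework, check that the $L^2$-inner product induced by $(\mu,g)$ on $C^\infty(M)\oplus C^\infty(M,H^*)$ matches the one used in Chernoff's statement, and confirm that the principal symbol of $D$ is uniformly bounded (automatic on compact $M$). An alternative, more self-contained route would be a direct Friedrichs mollification argument for any $u\in\ker(\Delta_H^*+I)$, using the decomposition $\Delta_H = \sum_k X_k^* X_k$ in a local orthonormal frame and applying the Friedrichs commutator lemma to the first-order vector fields $X_k$; in that approach the main subtlety is handling the terms involving $X_k u$, which is a priori only a distribution, whereas the Chernoff reduction sidesteps this issue entirely.
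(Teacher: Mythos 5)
Your proposal is correct and coincides with the paper's second proof of Theorem~\ref{mainthm} (the one given at the end of Section~\ref{s:proofs}): the same matrix operator $A=\begin{pmatrix} 0 & d_H^*\\ d_H & 0\end{pmatrix}$ on $L^2(M,\mu)\oplus L^2(M,H^*,\mu)$, Chernoff's theorem applied to its powers, and reading off $\Delta_H=d_H^*d_H$ from the diagonal of $A^2$. The paper's other, longer proof via longitudinal hypoellipticity is a genuinely different route, but your argument matches the Chernoff one exactly.
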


One can give a proof of Theorem~\ref{mainthm}, using a
well-known result of Chernoff \cite{Chernoff} based on the theory of
first order linear symmetric hyperbolic systems. This proof is given
in Section~\ref{s:proofs}. We also present another proof of Theorem~\ref{mainthm}, which is more complicated, but we hope that the techniques used in this proof will be helpful for the study of more refined spectral properties of the operator $\Delta_H$.

If the distribution $H$ is completely integrable, then, by the
Frobenius theorem, it gives rise to a smooth foliation $\mathcal F$
on $M$. In this case, the operator $\Delta_H$ is a formally self-adjoint longitudinally elliptic operator with respect to $\mathcal F$. Spectral properties of this operator, in particular, its self-adjointness have been studied in several papers (see, for instance, \cite{Connes79,Kord95,Vassout} and the references therein). Here an important role is played by the longitudinal pseudodifferential calculus for foliations developed by Connes in \cite{Connes79}. On the other
hand, if $H$ is completely nonintegrable (or bracket-generating), then,
using H{\"o}rmander's sum of the squares theorem \cite{Hormander67}, one can show that the operator $\Delta_H$ is hypoelliptic, that easily implies its self-adjointness. The proof of Theorem~\ref{mainthm} in the general
case combines two approaches mentioned above. We assume that the distribution $H$ defines a singular foliation $\mathcal F$ in the sense of Stefan and Sussmann
\cite{Stefan,Sussmann}. Then the operator $\Delta_H$ can be considered as a longitudinally hypoelliptic operator with respect to $\mathcal F$. In \cite{Andr-Skandalis-II}, Androulidakis and Skandalis developed a pseudodifferential calculus on singular foliations. Following Kohn's proof of H{\"o}rmander's sum of the squares theorem, we derive subelliptic estimates and prove longitudinal hypoellipticity for the operator $\Delta_H$ in the scale of longitudinal Sobolev spaces on $M$ associated with the singular foliation $\mathcal F$. Using these results, we easily complete the proof of Theorem~\ref{mainthm}.

Theorem~\ref{mainthm} allows us to consider more refined spectral
properties of the Laplacian $\Delta_H$. First of all, by spectral
theorem, we can consider functions of $\Delta_H$ such as the heat
operator $e^{-t\Delta_H}$, the wave operator $e^{it\sqrt{\Delta_H}}$
and so on. Using the longitudinal hypoellipticity result mentioned above, we immediately get the following theorem.

\begin{theorem}\label{t:smoothing}
Suppose that the distribution $H$ defines a singular foliation $\mathcal F$. 
For any function $\varphi$ from the Schwartz space $\mathcal
S(\mathbb R)$, the operator $\varphi(\Delta_H)$ extends to a bounded
operator from $H^s(\mathcal F)$ to $H^t(\mathcal F)$ for any $s,t\in
\mathbb R$.
\end{theorem}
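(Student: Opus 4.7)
The plan is to combine the functional calculus for $\Delta_H$ (available thanks to Theorem~\ref{mainthm}) with the longitudinal hypoellipticity of $\Delta_H$ announced just above. Spelled out in the longitudinal Sobolev scale associated with $\mathcal F$, the latter should take the form of a subelliptic estimate
\[
\|u\|_{H^{s+\varepsilon}(\mathcal F)} \le C_s\bigl(\|\Delta_H u\|_{H^s(\mathcal F)} + \|u\|_{H^s(\mathcal F)}\bigr)
\]
with some fixed gain $\varepsilon>0$. A standard iteration of this inequality yields, for every $N\in\NN$,
\[
\|u\|_{H^{N\varepsilon}(\mathcal F)} \le C_N \sum_{j=0}^{N}\|\Delta_H^{\,j}u\|_{L^2(M,\mu)}.
\]

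First I would show that $\varphi(\Delta_H)$ extends to a bounded map $L^2(M,\mu)\to H^t(\mathcal F)$ for every $t\in\RR$. Given $f\in L^2(M,\mu)$, set $u=\varphi(\Delta_H)f$; for each $j\ge 0$ the function $\psi_j(\lambda):=\lambda^{j}\varphi(\lambda)$ belongs to $\mathcal S(\RR)$, and is in particular bounded, so by the spectral theorem $\|\Delta_H^{\,j}u\|_{L^2}=\|\psi_j(\Delta_H)f\|_{L^2}\le\|\psi_j\|_\infty\|f\|_{L^2}$. Plugging this into the iterated estimate above for $N$ large enough (and using the trivial inclusion $L^2\hookrightarrow H^t(\mathcal F)$ for $t<0$) gives the claim. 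Applying the same iterated estimate with $N=k$ to $u=(I+\Delta_H)^{-k}f$, and noting that $\lambda^{j}(1+\lambda)^{-k}$ is bounded on $[0,\infty)$ for $0\le j\le k$, one obtains in the same way that $(I+\Delta_H)^{-k}$ maps $L^2(M,\mu)$ boundedly into $H^{k\varepsilon}(\mathcal F)$. Since $(I+\Delta_H)^{-k}$ is self-adjoint on $L^2(M,\mu)$, duality between $H^s(\mathcal F)$ and $H^{-s}(\mathcal F)$ then produces a bounded extension $(I+\Delta_H)^{-k}\colon H^{-k\varepsilon}(\mathcal F)\to L^2(M,\mu)$.

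For arbitrary $s,t\in\RR$, choose $k\in\NN$ with $k\varepsilon\ge -s$ and use the functional-calculus identity
\[
\varphi(\Delta_H) = \psi_k(\Delta_H)\,(I+\Delta_H)^{-k},\qquad \psi_k(\lambda):=(1+\lambda)^{k}\varphi(\lambda)\in\mathcal S(\RR).
\]
Since $k\varepsilon\ge -s$, the continuous inclusion $H^s(\mathcal F)\hookrightarrow H^{-k\varepsilon}(\mathcal F)$ followed by $(I+\Delta_H)^{-k}\colon H^{-k\varepsilon}(\mathcal F)\to L^2(M,\mu)$ yields a bounded map $H^s(\mathcal F)\to L^2(M,\mu)$. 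Composing with $\psi_k(\Delta_H)\colon L^2(M,\mu)\to H^t(\mathcal F)$, given by the first step applied to the Schwartz function $\psi_k$, produces the required bounded extension $\varphi(\Delta_H)\colon H^s(\mathcal F)\to H^t(\mathcal F)$. The only genuine obstacle is the subelliptic estimate itself, which is the main technical content of the preceding sections; once that is in hand, the theorem follows by routine functional calculus and duality.
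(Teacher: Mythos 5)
Your proof is correct and follows essentially the same route as the paper: iterate the subelliptic estimate to show that any $\psi\in\mathcal S(\mathbb R)$ gives $\psi(\Delta_H)\colon L^2\to H^t(\mathcal F)$, use self-adjointness and duality to get $(1+\Delta_H)^{-k}\colon H^{-k\varepsilon}(\mathcal F)\to L^2$, and conclude via the factorization $\varphi(\Delta_H)=\bigl((1+\lambda)^k\varphi\bigr)(\Delta_H)\,(1+\Delta_H)^{-k}$. The only cosmetic difference is that you package the iteration as the explicit bound $\|u\|_{N\varepsilon}\le C_N\sum_{j=0}^N\|\Delta_H^{\,j}u\|$, whereas the paper iterates the estimate directly on $\varphi(\Delta_H)u$; the content is identical.
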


We can also use the spectral properties of the operator $\Delta_H$
to define invariants of smooth distributions. For instance, one
can consider the class of distributions $H$ such that the spectrum
of the associated Laplacian $\Delta_H$ has a gap near zero. It is
easy to see that this property of $\Delta_H$ is independent of the
choice of $g$ and $\mu$. For smooth foliations, it is apparently related with property (T) for its holonomy groupoid (see, for instance, a discussion in \cite[Remark 10]{mathann}). To
study more refined invariants of distributions, it would be nice to
have some natural way to choose $\mu$ and $g$ that would
give rise to the intrinsic Laplacian associated with $H$. The
problem of the Laplacian and of the intrinsic Laplacian was
extensively discussed recently in sub-Riemannian geometry (see, for
instance, \cite{Montgomery-book,Agrachev-BGR09,Gordina-Laetsch,HK}
and references therein). In the general case, such an intrinsic
choice is not always possible. For instance, in the case when $H$ is
integrable, $g$ and $\mu$ look completely independent: $g$ is
responsible for the longitudinal structure and $\mu$ for the
transverse one.

In \cite{Omori-Kobayashi,Petronilho,Shimoda} (see also the
references therein), the authors studied global hypoellipticity of
H{\"o}rmander's sum of the squares operators. In the case when $H$ has
transverse symmetries given by a Riemannian foliation, orthogonal to
$H$, the associated Laplacian (sometimes called the horizontal
Laplacian) was studied in
\cite{Baudoin14,Baudoin-Kim,noncom,JGP07,AGAG08,Prokhorenkov-Richardson}
(see also the references therein). In particular, its
self-adjointness was established in \cite{noncom}. In \cite{GGK}
(see also \cite{Daniel-Ma}), the authors introduced the
characteristic Laplacian associated with an arbitrary smooth
distribution $H$ and a Riemannian metric on $M$. This operator
coincides with the operator $\Delta_H$ in degree $0$, if $g$ is the
restriction of the Riemannian metric to $H$ and $\mu$ is the
Riemannian volume form. The problem of constructing natural
geometric operators on differential forms associated with an
arbitrary smooth distribution is a very interesting open problem
(see, for instance, \cite{Nicolaescu,Petit,Rumin} and references
therein for some related results in sub-Riemannian geometry).

The paper is organized as follows. In Section~\ref{s:proofs}, we
state theorems on subelliptic estimates and longitudinal hypoellipticity for the Laplacian $\Delta_H$ and show how these results enable us to prove the main results of the paper. In Section~\ref{s:foliations}, we give
necessary information about singular foliations and
pseudodifferential calculus on singular foliations.
Section~\ref{s:subelliptic} contains the proofs of the theorems on subelliptic
estimates and longitudinal hypoellipticity stated in Section~\ref{s:proofs}.

The author is grateful to I. Androulidakis and G. Skandalis for very useful discussions and remarks and to the anonymous referee for suggestions to improve the paper.

\section{Longitudinal hypoellipticity and proofs of main results}\label{s:proofs}

As above, let $M$ be a connected compact smooth manifold of dimension $n$ equipped with a positive smooth density $\mu$. Let $H$ be a smooth
rank $p$ distribution on $M$ and $g$ be a smooth fiberwise inner
product on $H$. Consider the $C^\infty(M)$-module $C^\infty(M,TM)$ of smooth vector fields on $M$. It is a Lie algebra with respect to the Lie bracket. Denote by $C^\infty(M,H)$ the submodule of $C^\infty(M,TM)$, which consists of smooth vector fields, tangent to $H$ at each point.  Let $\mathcal F$ be the minimal submodule of $C^\infty(M,TM)$, which contains $C^\infty(M,H)$ and is stable under Lie brackets. We assume that $\mathcal F$ is locally finitely generated. Then it is a singular foliation in the sense of Stefan and Sussmann. We will use classes $\Psi^m({\mathcal F})$ of longitudinal pseudodifferential operators and the corresponding scale
$H^s(\mathcal F)$ of longitudinal Sobolev space associated with
$\mathcal F$ (we refer the reader to Section~\ref{s:foliations} for necessary
information about singular foliations and pseudodifferential
calculus on singular foliations).

First, we state subelliptic estimates for the operator $\Delta_H$. 

\begin{theorem}\label{t:Hs-hypo}
There exists $\epsilon>0$ such that, for any $s\in \mathbb R$, we have
\[
\|u\|_{s+\epsilon}^2 \leq
C_s\left(\|\Delta_Hu\|_s^2+\|u\|_s^2\right), \quad u\in C^\infty(M),
\]
where $C_s>0$ is some constant and $\|\cdot\|_s$ denotes the norm in $H^s(\mathcal F)$.
\end{theorem}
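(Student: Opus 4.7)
The plan is to adapt Kohn's proof of H\"ormander's sum of squares theorem to the longitudinal pseudodifferential calculus $\Psi^\bullet(\mathcal F)$ of Androulidakis--Skandalis. The starting point is the baseline estimate coming from the local formula $\Delta_H|_\Omega=\sum_{k=1}^p X_k^*X_k$: with $X_1,\dots,X_p$ a local orthonormal frame of $H$, pairing $\Delta_H u$ against $u$ and gluing by a partition of unity gives
\[
\sum_k \|X_k u\|_0^2 \le C\bigl(\|\Delta_H u\|_0^2 + \|u\|_0^2\bigr),
\]
so that $u$ is controlled in the directions of the generators of $H$ (the lower-order terms created by the partition of unity are absorbed into $\|u\|_0^2$).

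The heart of the argument is to upgrade this baseline bound to control along iterated brackets. Following Kohn, for a single bracket $Y=[X_j,X_k]\in\mathcal F$, fix a positive self-adjoint elliptic $\Lambda\in\Psi^1(\mathcal F)$ and write
\[
\|Yu\|_{-1/2}^2 = (\Lambda^{-1}Yu,Yu) = (Y^*\Lambda^{-1}Yu,u).
\]
Substituting $Y=X_jX_k-X_kX_j$ and commuting the $X_l$ past $\Lambda^{-1}$ produces remainders of order $-1$ in $\Psi^\bullet(\mathcal F)$, which can be absorbed using the baseline estimate, giving $\|[X_j,X_k]u\|_{-1/2}^2\le C(\|\Delta_H u\|_0^2+\|u\|_0^2)$. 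An induction on bracket length yields an analogous estimate for every iterated bracket of the $X_k$'s in $H^{-1+\epsilon_r}(\mathcal F)$, with the gain $\epsilon_r$ decreasing (typically halving) at each step. Since $\mathcal F$ is locally finitely generated, finitely many brackets of some bounded length $r_0$ suffice to generate $\mathcal F$ locally, so a uniform positive gain $\epsilon_0$ depending only on $r_0$ can be extracted.

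Because every element of $\mathcal F$ belongs to $\Psi^1(\mathcal F)$ and the generators of $\mathcal F$ account for the principal symbol of $\Lambda^{\epsilon_0}$ modulo $\Psi^{-\infty}$, an elliptic-regularity argument in the longitudinal calculus assembles the bracket estimates into the full subelliptic bound $\|u\|_{\epsilon_0}^2 \le C(\|\Delta_H u\|_0^2 + \|u\|_0^2)$, settling the case $s=0$. The passage to general $s$ is the standard device of applying the $s=0$ estimate to $\Lambda^s u$ with $\Lambda\in\Psi^1(\mathcal F)$ elliptic and positive, and rewriting the commutator $[\Delta_H,\Lambda^s]\in\Psi^{s+1}(\mathcal F)$ by means of the calculus so that the excess order is absorbed into the right-hand side modulo lower-order terms.

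The main obstacle is that $\Psi^\bullet(\mathcal F)$ is built on the holonomy groupoid of a singular foliation, which need not be smooth or Hausdorff. The availability of a positive self-adjoint elliptic $\Lambda\in\Psi^1(\mathcal F)$ with powers $\Lambda^\sigma\in\Psi^\sigma(\mathcal F)$, the expected orders of longitudinal commutators $[\Lambda^\sigma,X_k]$, and the $L^2$-boundedness of order-zero operators must all be drawn carefully from \cite{Andr-Skandalis-II}. Moreover, the partition-of-unity gluing of the local orthonormal frames of $H$ must be implemented consistently with the groupoid-level calculus, and all commutator remainders need to be identified as elements of the calculus of the expected order; these structural properties of the ACS calculus are the technical points on which the scheme rests and will need to be verified in Section~\ref{s:subelliptic}.
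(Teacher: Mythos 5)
Your proposal follows essentially the same route as the paper: Kohn's method transplanted into the Androulidakis--Skandalis calculus, with the baseline estimate $\sum_k\|X_ku\|^2\le C((\Delta_Hu,u)+\|u\|^2)$, an induction on iterated brackets with a halving gain, assembly of the full estimate via a parametrix for the longitudinally elliptic Laplacian built from generators of $\mathcal F$, and reduction of general $s$ to $s=0$ by commuting $\Delta_H$ past $\Lambda_s$. The paper merely packages the bracket induction as a two-sided ideal $\mathcal P\subset\Psi^0(\mathcal F)$ of operators satisfying the subelliptic estimate, and makes explicit the one point you leave implicit, namely that $[\Delta_H,\Lambda_s]$ must be factored as $\sum_k T^s_kX_k+T^s_0$ with $X_k$ horizontal so that the excess order is absorbed by the baseline estimate at level $s$.
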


As a consequence, we get the following longitudinal hypoellipticity result. 

\begin{theorem}\label{t:hypo}
If $u\in H^{-\infty}(\mathcal F):=\bigcup_{t\in \mathbb R}H^t(\mathcal F)$ such that $\Delta_Hu\in H^s(\mathcal F)$ for some $s\in \mathbb R$, then $u \in H^{s+\varepsilon}(\mathcal F)$. 
\end{theorem}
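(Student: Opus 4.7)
The plan is to deduce Theorem~\ref{t:hypo} from the subelliptic a priori estimate of Theorem~\ref{t:Hs-hypo} by the standard argument: extend the estimate from smooth functions to distributions via Friedrichs-type regularization, then bootstrap in the longitudinal Sobolev scale. Everything takes place inside the pseudodifferential calculus $\Psi^\bullet(\mathcal F)$ of Androulidakis and Skandalis recalled in Section~\ref{s:foliations}.

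Concretely, I would fix a family of regularizers $\{J_\delta\}_{\delta \in (0,1]}$ built from the longitudinal calculus, for instance as $J_\delta = \phi(\delta \Lambda)$, where $\Lambda \in \Psi^1(\mathcal F)$ is longitudinally elliptic and self-adjoint and $\phi \in C_c^\infty(\mathbb R)$ satisfies $\phi(0) = 1$. This family should lie in $\Psi^{-\infty}(\mathcal F)$ for each $\delta$, be uniformly bounded in $\Psi^0(\mathcal F)$, and converge strongly to the identity on every $H^r(\mathcal F)$. For $u \in H^{-\infty}(\mathcal F)$, the regularization $J_\delta u$ lies in $H^\infty(\mathcal F)$, and after first extending the estimate of Theorem~\ref{t:Hs-hypo} from $C^\infty(M)$ to $H^\infty(\mathcal F)$ by density, applying it to $J_\delta u$ gives
\[
\|J_\delta u\|_{s+\epsilon}^2 \le C_s\bigl(\|\Delta_H J_\delta u\|_s^2 + \|J_\delta u\|_s^2\bigr).
\]
Writing $\Delta_H J_\delta u = J_\delta \Delta_H u + [\Delta_H,J_\delta]u$ and using uniform control of $[\Delta_H, J_\delta]$ in the A-S calculus, one passes to the limit $\delta \to 0$ to obtain a distributional version of the subelliptic estimate valid whenever $u \in H^s(\mathcal F)$ and $\Delta_H u \in H^s(\mathcal F)$.

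Next, I would bootstrap. Given $u \in H^t(\mathcal F)$ with $\Delta_H u \in H^s(\mathcal F)$, iterate the distributional estimate, gaining $\epsilon$ of longitudinal regularity at each step. Setting $t_0 := \min(t,s)$ and $t_{k+1} := \min(t_k + \epsilon, s)$, after finitely many steps one reaches $t_k = s$, at which point a final application of the estimate yields $u \in H^{s+\epsilon}(\mathcal F)$, completing the proof.

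The main obstacle is establishing the uniform commutator bound for $[\Delta_H, J_\delta]$ in the longitudinal Sobolev scale, the singular-foliation analogue of Friedrichs' classical mollifier lemma. In the usual manifold setting such bounds are standard consequences of symbol calculus, but here one must derive them from the symbolic calculus of $\Psi^\bullet(\mathcal F)$ together with the specific structure $\Delta_H = \sum_k X_k^* X_k$, taking advantage of the adjoint pairing when distributing the commutator across the sum of squares.
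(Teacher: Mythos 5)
Your overall strategy---construct Friedrichs-type mollifiers $J_\delta$ in the longitudinal calculus, apply the a priori estimate of Theorem~\ref{t:Hs-hypo} to $J_\delta u$, split $\Delta_H J_\delta u = J_\delta \Delta_H u + [\Delta_H,J_\delta]u$, and bootstrap---is exactly the one the paper follows (the paper refers to \cite[Ch.~II, Lemma~5.3]{Treves} for the final passage). However, your specific proposal $J_\delta=\phi(\delta\Lambda)$ with $\phi\in C_c^\infty(\mathbb R)$ is problematic in this setting, and it is precisely the step the paper is careful to avoid. For this choice one needs to know that $\phi(\delta\Lambda)$ lies in $\Psi^{-\infty}(\mathcal F)$ (or at least that the family is uniformly bounded in $\Psi^0(\mathcal F)$ with the symbolic control needed for the commutator estimate). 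In the Androulidakis--Skandalis calculus there is no off-the-shelf Helffer--Sj\"ostrand-type result placing $\phi(\Lambda)$ in the pseudodifferential classes, and establishing even the weaker smoothing property of functions of $\Delta_H$ is the content of Theorem~\ref{t:smoothing}, which in this paper is itself a \emph{consequence} of the hypoellipticity being proved here---so the functional-calculus route risks circularity. The paper sidesteps this by constructing $J_\varepsilon$ directly at the level of pseudodifferential kernels: on each bi-submersion $(U,t,s)$ with identity bisection $V$, take $k_\varepsilon\in\mathcal P^{-\infty}_c(U,V;\Omega^{1/2})$ with $k_{\varepsilon,0}=0$ and symbol $a_\varepsilon(v,\eta)=\rho(v,\varepsilon\eta)$, where $\rho\equiv 1$ on $V$, and glue via a partition of unity. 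This scaled-symbol construction keeps $J_\varepsilon$ manifestly inside $\Psi^{-\infty}(\mathcal F)$, uniformly bounded in $\Psi^0(\mathcal F)$, and makes the commutator bounds accessible by symbol calculus alone: for any $A\in\Psi^m(\mathcal F)$ the family $[A,J_\varepsilon]$ is bounded in $\Psi^{m-1}(\mathcal F)$, and iterated commutators $[B,[A,J_\varepsilon]]$ are bounded in $\Psi^{m+m'-2}(\mathcal F)$. In particular, no use is made of the sum-of-squares structure $\Delta_H=\sum_k X_k^* X_k$ in the commutator bound; it is a purely general pseudodifferential fact applied to $\Delta_H\in\Psi^2(\mathcal F)$, so your remark about ``distributing the commutator across the sum of squares'' is not needed. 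Replacing your functional-calculus mollifiers by the scaled-symbol ones turns your outline into essentially the paper's proof.
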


The proofs of Theorems~\ref{t:Hs-hypo} and \ref{t:hypo} will be given in
Section~\ref{s:subelliptic}. Here we show how to prove
Theorems~\ref{mainthm} and~\ref{t:smoothing} on the base of these
theorems.

\begin{proof}[Proof of Theorem~\ref{mainthm}]
By the basic criterion of essential self-ad\-joint\-ness, it is
sufficient to show that $\ker(\Delta_H^*\pm i)=\{0\}$, where
$\Delta_H^*$ is the adjoint of $\Delta_H$ considered as an unbounded
linear operator in $L^2(M,\mu)$ with domain $C^\infty(M)$. Moreover,
it is sufficient to show that $\ker(\Delta_H^*\pm i)$ is
contained in the domain ${\rm Dom}\,\overline{\Delta_H}$ of the closure of $\Delta_H$ in $L^2(M,\mu)$. Let $u\in \ker(\Delta_H^*\pm i)$. So we have $u\in
L^2(M,\mu)$ and $(\Delta_H^*\pm i)u=0$. Since $\Delta_H$ is
symmetric on $C^\infty(M)$, we obtain that $(\Delta_H\pm i)u=0$,
where $\Delta_Hu$ is understood in the distributional sense. Taking into account that $u\in L^2(M,\mu)\subset H^{-\infty}(\mathcal
F)$ and using Theorem~\ref{t:hypo}, we obtain that $u$ is in $H^\infty(\mathcal
F):=\bigcap_{t\in \mathbb R}H^t(\mathcal F)$. This immediately completes the proof, because it is easy to see that 
$H^2(\mathcal F)$ is contained in ${\rm Dom}\,\overline{\Delta_H}$ (see
Theorem~\ref{t:action_in_Sobolev} and Proposition~\ref{p:density_in_Sobolev} below).
\end{proof}

\begin{remark}
Observe that the paper \cite{Andr-Skandalis-II} deals with Hilbert
modules over the $C^*$-algebra $C^*(M,\mathcal F)$ of the singular
foliation $\mathcal F$. Unlike \cite{Andr-Skandalis-II}, we
work not with Hilbert modules and $C^*$-algebras, but with the concrete
representation of the $C^*$-algebra $C^*(M,\mathcal F)$ on $L^2(M)$.
This enables us to use some results of theory of linear operators in Hilbert
spaces (first of all, the basic criterion of essential self-ad\-joint\-ness).
It would be very interesting to prove a $C^*$-module version of Theorem~\ref{mainthm}, stating that $\Delta_H$ gives rise to a regular
(unbounded) self-adjoint multiplier of $C^*(M,\mathcal F)$. For
longitudinally elliptic operators on $\mathcal F$, this was proved
in \cite{Andr-Skandalis-II}, extending a similar result for regular
foliation by Vassout \cite{Vassout} (see also \cite{Kord95}).
We also note that the proof of Theorem~\ref{t:Hs-hypo} can be easily extended to the Hilbert $C^*$-module setting.
\end{remark}

\begin{proof}[Proof of Theorem~\ref{t:smoothing}]
Let $\varphi\in \mathcal S(\mathbb R)$. Then, by Theorem
\ref{t:L2-hypo}, we have
\[
\|\varphi(\Delta_H)u\|_{\epsilon}^2 \leq
C\left(\|\Delta_H\varphi(\Delta_H)u\|^2+\|\varphi(\Delta_H)u\|^2\right)\leq
C_1(\varphi)\|u\|^2, u\in C^\infty(M).
\]
Therefore, the operator $\varphi(\Delta_H)$ defines an operator from $L^2(M,\mu)$ to $H^\epsilon(\mathcal F)$.
Repeating this argument, we obtain that, for any $\varphi\in
\mathcal S(\mathbb R)$, the operator $\varphi(\Delta_H)$ defines an
operator from $L^2(M,\mu)$ to $H^s(\mathcal F)$ and, by duality, from $H^{-s}(\mathcal F)$ to
$L^2(M,\mu)$ for any $s\geq 0$. It remains to show that, for any
$\varphi\in \mathcal S(\mathbb R)$, the operator $\varphi(\Delta_H)$
defines an operator from $H^{-t}(\mathcal F)$ to $H^s(\mathcal F)$ for any $s,t\geq 0$.

The operator $\Delta_H+1$ is invertible in $L^2(M,\mu)$, and, by
Theorem \ref{t:L2-hypo}, the inverse
$(\Delta_H+1)^{-1}$ acts from $L^2(M,\mu)$ to $H^\epsilon(\mathcal F)$:
\[
\|(\Delta_H+1)^{-1}u\|_{\epsilon}\leq C\|u\|, \quad v\in
C^\infty(M).
\]
Using Theorem \ref{t:Hs-hypo} repeatedly, we obtain that, for any natural $N$, the operator $(\Delta_H+1)^{-N}$ acts from $L^2(M,\mu)$ to $H^{N\epsilon}(\mathcal F)$ and,
by duality, from $H^{-N\epsilon}(\mathcal F)$ to $L^2(M,\mu)$:
\[
\|(\Delta_H+1)^{-N}u\| \leq C\|u\|_{-N\epsilon}, \quad u\in
C^\infty(M).
\]
Finally, for any $\varphi\in \mathcal S(\mathbb R)$, $s>0$ and
natural $N$, we get
\begin{multline*}
\|\varphi(\Delta_H)u\|_s=
\|\varphi(\Delta_H)(\Delta_H+1)^{N}(\Delta_H+1)^{-N}u\|_s \\ \leq C\|(\Delta_H+1)^{-N}u\| \leq C\|u\|_{-N\epsilon}, \quad u\in C^\infty(M).
\end{multline*}
Thus, the operator $\varphi(\Delta_H)$ defines an operator from
$H^{-N\epsilon}(\mathcal F)$ to $H^s(\mathcal F)$.
\end{proof}

At the end of this section, we recall the proof of
Theorem~\ref{mainthm} mentioned in Introduction, which is based on
the theory of first order linear symmetric hyperbolic systems. Here we don't assume that the distribution $H$ defines a singular foliation. 

\begin{proof}[Proof of Theorem~\ref{mainthm}]
On the Hilbert space $H=L^2(M,\mu)\oplus L^2(M,H^*,\mu)$, consider the operator $A$, with domain $D(A)=C^\infty(M)\oplus
C^\infty(M,H^*)$, given by the matrix
\[
A=\begin{pmatrix} 0 & d^*_H\\
d_H & 0
\end{pmatrix}.
\]
Observe that the operator $A$ is symmetric. Applying \cite[Theorem
2.2]{Chernoff} to the skew-symmetric operator $L=iA$, we obtain that
every power of $A$ is essentially self-adjoint. Since
\[
A^2=\begin{pmatrix} d^*_Hd_H & 0\\
0 & d_Hd^*_H
\end{pmatrix},
\]
the operator $d^*_Hd_H$ is essentially self-adjoint on
$C^\infty(M)$.
\end{proof}

\section{Preliminaries}\label{s:foliations}

In this section, we will give necessary information about singular
foliations and describe basic facts of pseudodifferential calculus
on singular foliations, mostly due to
\cite{Andr-Skandalis-I,Andr-Skandalis-II}, adapted to a concrete
representation in the $L^2$ space on the ambient manifold $M$.

\subsection{Foliations and bi-submersions}

Let $M$ be a smooth manifold. Consider the $C^\infty_c(M)$-module
$C^\infty_c(M, TM)$ of smooth, compactly supported vector fields on
$M$. As in \cite{Andr-Skandalis-I}, by a singular foliation $\mathcal F$ on
$M$, we will mean a locally finitely generated
$C^\infty_c(M)$-submodule of $C^\infty_c(M, TM)$ stable under Lie
brackets. Here a submodule $\mathcal E$ of $C^\infty_c(M, TM)$ is
said to be locally finitely generated if, for any $p\in M$, there
exists an open neighborhood $U$ of $p$ in $M$ and vector fields
$X_1,\ldots, X_k\in C^\infty_c(U,TU)$ such that, for any $f\in
C^\infty_c(U)$ and $X\in C^\infty_c(M,TM)$, we have
$fX\left|_U\right.=\sum_{j=1}^kf_jX_j\in C^\infty_c(U,TU)$ with some
$f_1,\ldots, f_k\in C^\infty_c(M)$.

Let $\mathcal F$ be a foliation on $M$ and $x\in M$. The tangent
space of the leaf is the image $F_x$ of $\mathcal F$ in $T_xM$ under
the evaluation map $C^\infty_c(M, TM) \to T_xM, x\mapsto X(x)$. Put
$I_x=\{f\in C^\infty(M): f(x)=0\}$. The fiber of $\mathcal F$ at $x$
is the quotient $\mathcal F_x=\mathcal F/I_x\mathcal F$. The
evaluation map induces a short exact sequence of vector spaces
\[
0\longrightarrow \mathfrak g_x \longrightarrow \mathcal F_x
\longrightarrow F_x \longrightarrow 0,
\]
where $\mathfrak g_x$ is a Lie algebra. One can show that, if, for
$x\in M$, the images of $X_1, \ldots , X_n \in \mathcal F$ in
$\mathcal F_x$ form a base of $\mathcal F_x$, then there exists a
neighborhood $U$ of $x$ in $M$ such that $\mathcal F$ restricted to
$U$ is generated by the restrictions of $X_1, \ldots , X_n$ to $U$.

For any smooth map $p:N\to M$ of a smooth manifold $N$ to $M$, we
denote by $p^{-1}(\mathcal F)$ the set of all vector fields on $N$
of the form $fY\in C^\infty_c(N;TN)$, where $f\in C^\infty_c(N)$ and
$Y$ is a vector field on $N$, which is $p$-related with some $X\in
\mathcal F$:  $dp_x(Y(x)) =X(p(x))$ for any $x\in N$. One can show
that $p^{-1}(\mathcal F)$ is a submodule of $C^\infty_c(N;TN)$.

\begin{definition}\label{d:bisub}
A bi-submersion of $(M,\mathcal F)$ is a smooth manifold $U$ endowed
with two smooth maps $s,t:U\to M$ which are submersions and satisfy:
\begin{description}
\item[(a)] $s^{-1}(\mathcal F)=t^{-1}(\mathcal F)$;
\item[(b)] $s^{-1}(\mathcal F)=C^\infty_c(U;\ker ds)+C^\infty_c(U;\ker dt)$.
\end{description}
\end{definition}

\begin{definition}
A locally closed submanifold $V\subset U$ is said to be an identity
bisection of a bi-submersion $(U,t,s)$ if the restriction
$s\left|_V\right. : V\to M$ (resp. $t\left|_V\right. : V\to M$) of
$s$ (resp. $t$) to $V$ is a diffeomorphism to an open subset $s(V)$
(resp. $t(V)$) of $M$, and, moreover,
$s\left|_V\right.=t\left|_V\right. $.
\end{definition}

An important class of bi-submersions is constructed in
\cite[Proposition 2.10(a)]{Andr-Skandalis-I}. Let $x\in M$. Let
$X_1,\ldots,X_n\in \mathcal F$ be vector fields whose images in
$\mathcal F_x$ form a basis of $\mathcal F_x$. For
$y=(y_1,\ldots,y_n)\in \mathbb R^n$, put $\varphi_y=\exp(\sum
y_iX_i)\in \exp\mathcal F$. Put $\mathcal W_0=\mathbb R^n\times M$,
$s_0(y,x)=x$ and $t_0(y,x)=\varphi_y(x)$. One can show that there is
a neighborhood $\mathcal W$ of $(0,x)$ in $\mathcal W_0$ such that
$(\mathcal W,t,s)$ is a bi-submersion where $s=s_0\left|_{\mathcal
W}\right.$ and $t=t_0\left|_{\mathcal W}\right.$. Such a
bi-submersion is called an identity bi-submersion.

A simple way to produce more bi-submersions, starting from the given
one, is described in \cite[Lemma 2.3]{Andr-Skandalis-I}. If
$(U,t,s)$ is a bi-submersion and $p:W\to U$ is a submersion, then
$(W,t\circ p,s\circ p)$ is a bi-submersion.

A morphism of bi-submersions $(U_i,t_i,s_i)$, $i=1,2$, is a smooth
map $f: U_1\to U_2$ such that, for any $u\in U_1$,
$s_1(u)=s_2(f(u))$ and $t_1(u)=t_2(f(u))$. Any submersion $p:W\to U$ is a morphism of bi-submersions $(W,t\circ p,s\circ p)$ and $(U,t,s)$.

As shown in \cite[Proposition 2.10(a)]{Andr-Skandalis-I}, the
identity bi-submersion $W$ at $x\in M$ provides a local model for
any bi-submersion, admitting a non-empty identity bisection,
containing $x$. More precisely, let $(V,t_V,s_V)$ be a bi-submersion
and $W\subset V$ be an identity bisection. Then, for any $v\in W$
with $s_V(v)=x\in M$, there exist an open neighborhood $V^\prime$
of $v$ in $V$ and a submersion $g:V^\prime \to \mathcal W$ which is
a morphism of bi-submersions, such that $g(v)=(0,x)$.

A stronger statement is shown in \cite[Lemma
2.5]{Andr-Skandalis-II}. Let $(U_j,t_j,s_j)$, $j=1,2$, be
bi-submersions, $V_j\subset U_j$ identity bisections and $u_j\in
V_j$ such that $s_1(u_1)=s_2(u_2)$. Then there exist an open
neighborhood $U^\prime_1$ of $u_1$ in $U_1$ and a morphism of
bi-submersions $g:U^\prime_1 \to U_2$ such that $g(u_1)=u_2$ and
$g(V_1\cap U^\prime_1)\subset U_2$.

For any bi-submersions $(U_j,t_j,s_j)$, $j=1,2$, we define their
composition $(U_1,t_1,s_1)\circ (U_2,t_2,s_2)=(U_1\circ U_2,t,s)$ as
follows. The manifold $U_1\circ U_2$ is the fiber product
\[
U_1\circ U_2=U_1\times_M U_2=\{(u_1,u_2)\in U_1\times U_2;
s_1(u_1)=t_2(u_2)\},
\]
and $s(u_1,u_2)=s_2(u_2)$ and $t(u_1,u_2)=t_1(u_1)$. For a
bi-submersion $(U,t,s)$, define its inverse as
$(U,t,s)^{-1}=(U,s,t)$. One can show that $U_1\circ U_2$ and
$U^{-1}$ are bi-submersions.

Denote by $\mathcal U_0$ the set of bi-submersions generated by
identity bi-submersions, that is, the minimal set of bi-sub\-mer\-sions
that contains all the identity bi-submersions and is closed under
operations of composition and taking inverse. $\mathcal U_0$ is
called the path holonomy atlas.

Sometimes, it is useful to extend the class of bi-submersions under considerations. We say that a bi-submersion $(W,t_W,s_W)$ is adapted to $\mathcal U_0$ at $w\in W$ if there exists an open subset $W^\prime\subset W$ containing $v$, a bi-submersion $(U,t,s)\in \mathcal U_0$ and a
morphism of bi-submersion $W^\prime\to U$. A bi-submersion
$(W,t_W,s_W)$ is adapted to $\mathcal U_0$ if for all $w\in W$,
$(W,t_W,s_W)$ is adapted to $\mathcal U_0$ at $w$.

\subsection{Regularizing operators}
From now on, we will assume that $M$ is compact.
In this section, we recall the definition of regularizing (or
leafwise smoothing) operators on $M$.  Our constructions will be adapted to a certain Hilbert structure in $L^2(M)$. Actually, we will describe a
$\ast$-representation in $L^2(M)$ of some involutive operator algebra associated with $\mathcal F$, which was introduced in \cite{Andr-Skandalis-I}. First, we fix a positive smooth density $\mu$ on $M$. For a vector space $E$ and $p\in (0,1]$, denote by $\Omega^pE$ the space of $p$-densities on $E$. 

Suppose that $(U,t,s)$ is a bi-submersion. Denote by $\Omega^{1/2}U$ the half-density bundle associated with the bundle $\ker ds \oplus \ker dt$ on $U$:
\[
\Omega^{1/2}U=\Omega^{1/2}\ker ds\otimes \Omega^{1/2}\ker dt.
\]
As shown in \cite[Section 3.2.1]{Andr-Skandalis-II}, for any quasi-invariant measure $\mu$ on $M$, there exists a measurable almost everywhere invertible section $\rho^U$ of $\Omega^{-1/2}\ker ds\otimes \Omega^{1/2}\ker dt$ on $U$ such that for every $f\in C_c(U; \Omega^{1/2}U)$ we have
\[
\int_M\left(\int_{s^{-1}(x)}(\rho^U_u)^{-1}\cdot f(u)\right)d\mu(x)=\int_M\left(\int_{t^{-1}(x)}\rho^U_u\cdot f(u)\right)d\mu(x).
\]
Here $(\rho^U)^{-1}\cdot f$ is a measurable section of $
\Omega^{1}\ker ds$ on $U$, which can be integrated along the fibers of $s$, giving rise to a function on $M$, and $\rho^U\cdot f$ is a measurable section of $\Omega^{1}\ker dt)$ on $U$, which can be integrated along the fibers of $t$.

If $\mu$ is given by a smooth positive density on $M$, $\rho^U$ can be constructed in the following way, which also shows its smoothness. First, for $u\in U$, we observe a short exact sequence
\[
0\longrightarrow \ker ds_u\longrightarrow T_uU\stackrel{ds_u}{\longrightarrow} TM_{s(u)}\longrightarrow 0, 
\]
which gives rise to an isomorphism
\[
\Omega^{1/2}T_uU\cong \Omega^{1/2}\ker ds_u\otimes \Omega^{1/2}TM_{s(u)}.
\]
Similarly, we get an isomorphism
\[
\Omega^{1/2}T_uU\cong \Omega^{1/2}\ker dt_u\otimes \Omega^{1/2} TM_{t(u)}.
\]
The smooth positive density $\mu$ on $M$ defines isomorphisms $
\Omega^{1/2}TM_{s(u)}\cong \mathbb C$ and $\Omega^{1/2}TM_{t(u)}\cong \mathbb C$. Combining these isomorphisms, we obtain a smooth invertible section $\rho^U$ of the bundle $\Omega^{-1/2}\ker ds\otimes \Omega^{1/2}\ker dt$.

\begin{definition}
For a bi-submersion $(U,t_U,s_U)$, the regularizing operator $R_U(k) : L^2(M)\to L^2(M)$ associated with the longitudinal kernel $k\in C^\infty_c(U, \Omega^{1/2}U)$ is defined as follows: for $\xi\in L^2(M)$,
\[
R_U(k)\xi(x)=\int_{t^{-1}(x)} (\rho^U\cdot k)(u) \xi(s(u)), \quad
x\in M.
\]
\end{definition}

First, observe that two longitudinal kernels associated with the different
bi-submersions can define the same operator in $L^2(M)$.
Let $\varphi : M\to N$ be a submersion, and let $E$ be a vector
bundle on $N$. Integration along the fibers gives rise to a linear
map $\varphi_{!} : C_c(M, \Omega^1(\ker d\varphi)\otimes
\varphi^*E)\to C_c(N,E)$ defined by
\[
\varphi_{!}(f)(x)=\int_{\varphi^{-1}(x)}f, \quad x\in N.
\]
As shown in \cite{Andr-Skandalis-I}, if $\varphi : U\to V$ is a
morphism of bi-submersions which is a submersion, then for every
$k\in C^\infty_c(U, \Omega^{1/2}U)$, we have
$R_U(k)=R_V(\varphi_{!}(k))$. More generally, let $k_1\in
C^\infty_c(U_1, \Omega^{1/2}U_1)$ and $k_2\in C^\infty_c(U_2,
\Omega^{1/2}U_2)$. Assume that there exists a submersion $p : W\to
U_1\sqcup U_2$, which is a morphism of bi-submersions, and $k\in
C^\infty_c(W, \Omega^{1/2}W)$ such that $p_!(k)=(k_1,k_2)$. Moreover, suppose that there exists a morphism $q: W\to V$ of bi-submersions,
which is a submersion, such that $q_!(k)=0$. Then
$R_{U_1}(k_1)=R_{U_2}(k_2)$.

To describe the composition of regularizing operators, we first
recall \cite[p. 24]{Andr-Skandalis-I} that, for any
bi-submersions $(U_j,t_j,s_j)$, $j=1,2$, there exists a canonical
isomorphism
\[
\Omega^{1/2}(U_1\circ U_2)_{(u_1,u_2)}\cong
\Omega^{1/2}(U_1)_{u_1}\otimes \Omega^{1/2}(U_2)_{u_2}.
\]

\begin{propos}{\cite[p. 32]{Andr-Skandalis-I}}
(1) For any $k_1\in C^\infty_c(U_1, \Omega^{1/2}U_1)$ and $k_2\in
C^\infty_c(U_2, \Omega^{1/2}U_2)$, we have
\[
R_{U_1}(k_1)\circ R_{U_2}(k_2)=R_{U_1\circ U_2}(k_1\otimes k_2),
\]
where $
k_1\otimes k_2\in C^\infty_c(U_1, \Omega^{1/2}U_1)\otimes
C^\infty_c(U_2, \Omega^{1/2}U_2)\cong C^\infty_c(U_1\times U_2,
\Omega^{1/2}(U_1\circ U_2))$.

(2) For any $k\in C^\infty_c(U, \Omega^{1/2}U)$, we have
\[
R_{U}(k)^*=R_{U^{-1}}(k^*),
\]
where $k^*=\bar k$ via the canonical isomorphism
$\Omega^{1/2}U^{-1}\cong \Omega^{1/2}U$.
\end{propos}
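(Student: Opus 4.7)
The plan is to establish both parts by unwinding the definition of $R_U(k)$ as a fiber integral and then reducing to two ingredients: Fubini's theorem for part (1), and the quasi-invariance identity defining $\rho^U$ for part (2). The nontrivial bookkeeping is entirely the interaction between the half-density factors and the canonical isomorphism $\Omega^{1/2}(U_1\circ U_2)_{(u_1,u_2)}\cong\Omega^{1/2}(U_1)_{u_1}\otimes\Omega^{1/2}(U_2)_{u_2}$ (and the analogous $\Omega^{1/2}U^{-1}\cong\Omega^{1/2}U$).

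For part (1), I would write
\[
[R_{U_1}(k_1)R_{U_2}(k_2)\xi](x)=\int_{t_1^{-1}(x)}(\rho^{U_1}\cdot k_1)(u_1)\int_{t_2^{-1}(s_1(u_1))}(\rho^{U_2}\cdot k_2)(u_2)\,\xi(s_2(u_2))
\]
and identify the iterated integral as an integral over the $t$-fiber of $U_1\circ U_2$ at $x$, which, via the projection $(u_1,u_2)\mapsto u_1$, fibres over $t_1^{-1}(x)$ with typical fiber $t_2^{-1}(s_1(u_1))$. The substantive verification is that, under the canonical isomorphism above, $\rho^{U_1\circ U_2}$ corresponds to $\rho^{U_1}\otimes\rho^{U_2}$; this is local and follows by checking that the tensor product satisfies the defining quasi-invariance equation against $\mu$, using Fubini twice. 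Combined with $s(u_1,u_2)=s_2(u_2)$ and $t(u_1,u_2)=t_1(u_1)$, the iterated integrand matches $(\rho^{U_1\circ U_2}\cdot(k_1\otimes k_2))(u_1,u_2)\,\xi(s(u_1,u_2))$, which is by definition the integrand of $R_{U_1\circ U_2}(k_1\otimes k_2)\xi(x)$.

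For part (2), I would test against an arbitrary $\eta\in L^2(M)$:
\[
\langle R_U(k)\xi,\eta\rangle=\int_M\int_{t^{-1}(x)}(\rho^U\cdot k)(u)\,\xi(s(u))\,\overline{\eta(t(u))}\,d\mu(x).
\]
Viewing $F(u)=k(u)\,\xi(s(u))\,\overline{\eta(t(u))}$ as a section of $\Omega^{1/2}U$, the quasi-invariance identity rewrites the right-hand side as $\int_M\int_{s^{-1}(x)}(\rho^U)^{-1}\cdot F\,d\mu(x)$. Since $U^{-1}=(U,s,t)$ swaps the roles of the two submersions, the defining property of $\rho^{U^{-1}}$ forces $\rho^{U^{-1}}=(\rho^U)^{-1}$; this section is real, so complex conjugation passes through it, and the resulting expression is precisely $\langle\xi,R_{U^{-1}}(\bar k)\eta\rangle$.

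The main obstacle, modest but essential, is the formal half-density bookkeeping — in particular, verifying the factorization $\rho^{U_1\circ U_2}=\rho^{U_1}\otimes\rho^{U_2}$ with the correct placement of $\Omega^{\pm 1/2}$ factors under the canonical isomorphisms. Once these identifications are pinned down, everything reduces to Fubini together with the single analytic input of quasi-invariance; the compactness of the supports of $k$, $k_1$, $k_2$ ensures absolute convergence of all integrals and legitimizes the interchanges.
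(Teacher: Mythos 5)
The paper itself offers no proof of this proposition: it is imported verbatim as a cited fact from \cite[p.~32]{Andr-Skandalis-I}, so there is no in-paper argument to compare against. Your outline is nevertheless sound, and it matches the way the result is established in the source: part~(1) is an exercise in Fubini on the fibered product once one knows how $\rho$ factors, and part~(2) is a single application of the quasi-invariance identity defining $\rho^U$ together with the symmetry $\rho^{U^{-1}}=(\rho^U)^{-1}$.

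Two small points worth tightening. First, in part~(2) the quasi-invariance identity is stated only for $f\in C_c(U;\Omega^{1/2}U)$, whereas your $F(u)=k(u)\,\xi(s(u))\,\overline{\eta(t(u))}$ has $L^2$ factors; you should either test against $\xi,\eta\in C(M)$ and extend by density (using that $R_U(k)$ is a bounded operator on $L^2(M)$), or observe that since $k$ has compact support and $\rho^U$ is locally bounded, the identity extends to compactly supported $L^1_{\mathrm{loc}}$ sections by dominated convergence. Second, the key factorization $\rho^{U_1\circ U_2}\cong\rho^{U_1}\otimes\rho^{U_2}$, which you correctly identify as the real content of part~(1), deserves a line of justification beyond ``local and follows by Fubini twice'': the defining uniqueness of $\rho$ (as an a.e.~invertible section satisfying the quasi-invariance equation) is what forces this identity once you have checked that the tensor product satisfies the same equation, so you should invoke that uniqueness explicitly. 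With these clarifications the argument is complete.
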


\subsection{Pseudodifferential operators}\label{s:psi}

In this section, we introduce the classes of pseudodifferential
operators on $M$ associated with the singular foliation $\mathcal F$
and describe their properties, following \cite{{Andr-Skandalis-II}}.
We will keep notation of the previous subsection.

Let $(U,t,s)$ be a bi-submersion and $V\subset U$ the identity
bisection. (Remark that $V$ may be empty.) Let $p: N\to V$ be the
normal bundle of the inclusion $V\hookrightarrow U$, that is,
$N_v=T_vU/T_vV$, $v\in V$. Choose a tubular neighborhood
$(U_1,\phi)$ of $V$ in $U$. Thus, $U_1$ is a neighborhood of $V$ in
$U$ and $\phi : U_1\to N$ is a local diffeomorphism such that
$\phi(v)=(v,0)$ for $v\in V$, and $d\phi\left|_V \right. : T_vU\to
T_{(v,0)}N$ induces the identity isomorphism $N_v=T_vU/T_vV\to
T_{(v,0)}N\cong N_v$. Let $p_{N^*} : N^*\to V$ be the conormal
bundle. Denote $ N^*U_1=\{(u,\eta)\in U_1\times N^* :
p(\phi(u))=p_{N^*}(\eta)\}$.

The space $\mathcal P^m_c(U,V;\Omega^{1/2})$ of pseudodifferential
kernels of order $m$ consists of all $k\in C^{-\infty}_c(U,
\Omega^{1/2}U)=C^\infty(U, \Omega^1(TU)\otimes
\Omega^{-1/2}U)^\prime$ of the form
\begin{multline*}
\langle k,f\rangle =\int_U k_0(u)f(u)+(2\pi)^{-d}\int_{N^*U_1} e^{-i\langle \phi(u),\eta\rangle} \chi(u)\cdot a(p\circ \phi(u),\eta) f(u), \\
f\in C^\infty_c(U, \Omega^1(TU)\otimes \Omega^{-1/2}U),
\end{multline*}
where $k_0\in C^\infty(U,  \Omega^{1/2}U)$, $d={\rm rank}\, N=\dim
U-\dim M$, $\chi\in C^\infty_c(U)$ is such that ${\rm supp}\,\chi
\subset U_1$ and $\chi\equiv 1$ in a neighborhood of $V$, $a\in
S^m_{cl}(V, N^*; \Omega^1N^*\otimes \Omega^{1/2}U\left|_V\right.)$.

Here, for any $v\in V$, the section $a(v,\cdot)$ is a smooth density on $N^*_v$
with values in the vector space $\Omega^{1/2}U_v$. In a local
coordinate system on an open set $V_1\subset {\mathbb R}^n \cong
V_1\subset V$  and a trivialization of the vector bundle $N^*$ over
it, it is written as $a(v,\eta)|d\eta|$, $v\in V_1$, $\eta\in
{\mathbb R}^d$, where $a\in S^m(V\times {\mathbb R}^d;
\Omega^{1/2}U)$ is a classical symbol of order $m$.

Note that, in \cite{Andr-Skandalis-II}, the authors assume that the
order $m$ is integer, but it is easy to see that all the results of \cite{Andr-Skandalis-II} can be easily extended to the case of an arbitrary real $m$.

\begin{remark}
In \cite{Andr-Skandalis-II}, elements of $\mathcal
P^m_c(U,V;\Omega^{1/2})$ are called generalized sections of the
bundle $\Omega^{1/2}U$ with compact support and pseudodifferential
singularities along $V$ of order $\leq m$. In fact, they are just
conormal distributions on $U$ for the submanifold $V\subset U$ (see, for instance, \cite{HormanderIII}).
\end{remark}

With any pseudodifferential kernel $k\in \mathcal
P^m_c(U,V;\Omega^{1/2})$, we associate an operator $R_U(k) :
C^\infty_c(s(U))\to C^\infty_c(t(U))$ as follows: for $f\in
C^\infty_c(s(U))$, we put
\begin{multline*}
R_U(k)f(x)=R_U(k_0)f(x)\\
+(2\pi)^{-d}\int_{N^*U^x_1} e^{-i\langle \phi(u),\xi\rangle}
\chi(u)\rho^U_u\cdot [a(p\circ \phi(u),\xi)]f(s_U(u)).
\end{multline*}
Here  $N^*U^x_1=\{(u,\eta)\in N^*U_1 : t(u)=x\}$.

Remark that, if $V$ is empty, then $R_U(k)$ is a regularizing operator. Using an
appropriate cut-off function, the operator $R_U(k)$ can be uniquely
extended to an operator $R(k)$ on $C^\infty(M)$.

Observe that the bundle $\Omega^1N^*\otimes
\Omega^{1/2}U\left|_V\right.$ is canonically trivial. Indeed, since
$V\subset U$ is an identity bisection of the bi-submersion $(U,t,s)$,
by definition, the restriction $s\left|_V\right. : V\to M$ of $s$ to
$V$ is a diffeomorphism to an open subset $s(V)$. It follows that
$d(s\left|_V\right.)_v=ds_v\left|_{T_vV}\right. :
T_vV\stackrel{\cong}\to T_{s(v)}M$. On the other hand, we have a
short exact sequence $0\to \ker ds_v \to T_vU \to T_{s(v)}M \to 0$,
which implies that $\ker ds_v\cong T_vU/T_{s(v)}M \cong
T_vU/T_vV=N_v$. Similarly, we get an isomorphism $\ker dt_v\cong
N_v$. Therefore, we have
\[
\Omega^1N^*\otimes \Omega^{1/2}U\left|_V\right. \cong
\Omega^1N^*\otimes \Omega^{1/2}(\ker ds)\left|_V\right.\otimes
\Omega^{1/2}(\ker dt)\left|_V\right. \cong V\times \mathbb C.
\]
Thus, we can consider the (full) symbol $a$ of the operator $R_U(k)$ as an element of $S^m_{cl}(V, N^*)$. The principal symbol $\tilde
\sigma_m(R_U(k))$ of $R_U(k)$ is defined as the homogeneous component of degree $m$ of $a$:
\begin{equation}\label{e:prin}
\tilde  \sigma_m(R_U(k))(v,\xi)=a_m(v,\xi), \quad v\in V,\quad
\xi\in N^*_v\setminus \{0\}.
\end{equation}
So $\tilde  \sigma_m(R_U(k))$ is a smooth, degree $m$ homogeneous 
function on $N^*\setminus 0$.

\begin{definition}
The class $\Psi^m({\mathcal F})$ consists of operators $P$ in
$C^\infty(M)$ of the form $P=\sum_{i=1}^dP_i$. where each operator $P_i$, $i=1,\ldots, d$, has the form $P_i=R(k_i)$ and $k_i \in \mathcal P_c^m(U_i, V_i, \Omega^{1/2})$ for some bi-submersion $(U_i,t_i,s_i)$ and identity bisection $V_i\subset
U_i$.
\end{definition}

In order to define the principal symbol of an operator from
$\Psi^m({\mathcal F})$, we first introduce the cotangent bundle of
$\mathcal F$ as $\mathcal F^*=\bigsqcup_{x\in M}\mathcal F^*_x$, where, for any $x\in M$, $\mathcal F^*_x$ is the dual space of $\mathcal F_x$, the fiber of $\mathcal F$ at $x$. Observe that $\mathcal F^*$ is not a vector bundle in the usual sense. One
can show that $\mathcal F^*$ is a locally compact topological space.

Let $(U,t,s)$ be a bi-submersion and $V\subset U$ the identity
bisection. Recall that $N\cong (\ker ds)\left|_V\right.\cong (\ker
dt)\left|_V\right.$. Therefore, by Definition~\ref{d:bisub}, for
$v\in V$, $ds_v : N_v\to \mathcal F_x$, $x=s(v)$, is an epimorphism.
So the dual map $ds^*_v$ embeds $\mathcal F^*_x$ to
$N^*_v$. The longitudinal principal symbol of the operator $R_U(k)$
associated with $k\in \mathcal P^m_c(U,V;\Omega^{1/2})$ is the homogeneous function $\sigma_m(R_U(k))$ of degree $m$ on $\mathcal F^*\setminus 0$, which is equal to zero on $\mathcal F^*_x\setminus \{0\}$ for $x\not \in s(V)$ and for $x\in s(V)$ is defined on $\mathcal F^*_x\setminus \{0\}$ by
\begin{equation}\label{e:symbol}
\sigma_m(R_U(k))(x,\xi)=\tilde
\sigma_m(R_U(k))(v,ds_{v}^*(\xi)),\quad \xi \in
\mathcal F^*_x\setminus \{0\},
\end{equation}
where $v=s^{-1}(x)$ and $\tilde  \sigma_m(R_U(k))\in C^\infty(N^*\setminus 0)$ is the (local) principal symbol of $R_U(k)$ defined by \eqref{e:prin}.

Extending by linearity the principal symbol map to $\Psi^m({\mathcal F})$, we get the longitudinal principal symbol map
$\sigma_m : \Psi^m({\mathcal F})\to C(\mathcal F^*\setminus 0)$.
One can show that this map is well-defined.

\begin{theorem}[cf. Theorem 3.15 in \cite{Andr-Skandalis-II}]\label{t:comp}
Given $P_i\in \Psi^{m_i}({\mathcal F})$, $i=1,2$, their composition
$P=P_1\circ P_2$ is in $\Psi^{m_1+m_2}({\mathcal F})$ and
$\sigma_{m_1+m_2}(P)=\sigma_{m_1}(P_1)\sigma_{m_2}(P_2)$.
\end{theorem}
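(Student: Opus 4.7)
My plan is to reduce by bilinearity to the case $P_j = R_{U_j}(k_j)$ with $k_j\in \mathcal{P}^{m_j}_c(U_j,V_j;\Omega^{1/2})$ for $j=1,2$, and then to identify the composition at the level of kernels on the fiber-product bi-submersion. First, I would extend the smooth-kernel identity recalled in the preceding proposition,
\[
R_{U_1}(k_1)\circ R_{U_2}(k_2) = R_{U_1\circ U_2}(k_1\otimes k_2),
\]
to the case of conormal kernels. The integrations along the $s$- and $t$-fibers are well defined here because the singular support of each $k_j$ is contained in $V_j$, whose tangent bundle is transverse to both $\ker ds_j$ and $\ker dt_j$; hence $k_1\otimes k_2$ descends under the natural projection to a distribution on $U_1\times_M U_2 = U_1\circ U_2$.

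Next I would show that, viewed on $U_1\circ U_2$, the tensor product $k_1\otimes k_2$ belongs to $\mathcal{P}^{m_1+m_2}_c(U_1\circ U_2, V_1\circ V_2;\Omega^{1/2})$, where $V_1\circ V_2 = (V_1\times V_2)\cap (U_1\circ U_2)$ is the induced identity bisection. The normal bundle of $V_1\circ V_2$ in $U_1\circ U_2$ splits canonically as $N_1\oplus N_2$, and after choosing tubular neighborhoods of $V_j$ in $U_j$ compatible with $s_j$ and $t_j$, the product $k_1\otimes k_2$ can be written as an oscillatory integral whose full symbol is the external product $a_1\boxtimes a_2$ of the full symbols of $k_1$ and $k_2$. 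Standard classical symbol calculus then gives a symbol of order $m_1+m_2$, so $P\in \Psi^{m_1+m_2}(\mathcal F)$.

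For the principal symbol I would apply \eqref{e:prin} and \eqref{e:symbol} pointwise. At $(v_1,v_2)\in V_1\circ V_2$, the compatibility $s_1(v_1)=t_2(v_2)$ together with the identity-bisection condition gives $s_2(v_2) = t_2(v_2) = s_1(v_1) = t_1(v_1) =: x$. The degree $m_1+m_2$ homogeneous component of $a_1\boxtimes a_2$ at $(v_1,v_2)$ is $a_{1,m_1}(v_1,\xi_1)\,a_{2,m_2}(v_2,\xi_2)$ on $N^*_{1,v_1}\oplus N^*_{2,v_2}$; pulling back along the duals $(ds_{1})^*_{v_1}$ and $(ds_{2})^*_{v_2}$, which both embed $\mathcal F^*_x$ into the respective $N^*_j$, yields $\sigma_{m_1+m_2}(P)(x,\xi) = \sigma_{m_1}(P_1)(x,\xi)\,\sigma_{m_2}(P_2)(x,\xi)$.

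The main obstacle is the middle step: rigorously verifying that the external product of two conormal distributions, restricted to the transverse submanifold $U_1\times_M U_2\subset U_1\times U_2$, remains a conormal distribution of the claimed class along $V_1\circ V_2$ with the expected full symbol. This is a wavefront-set/transversality argument worked out in \cite{Andr-Skandalis-II}, and hinges on choosing the tubular neighborhoods of the $V_j$ compatible with the submersions $s_j$ and $t_j$ so that the transverse directions to $V_1\circ V_2$ inside $U_1\circ U_2$ correspond to the direct-sum splitting $N_1\oplus N_2$ used in the symbol computation.
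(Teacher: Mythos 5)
The paper does not actually prove this theorem; it is quoted from Androulidakis--Skandalis (Theorem 3.15 of \cite{Andr-Skandalis-II}), so your sketch has to be measured against that argument. Your first and last steps are consistent with it, but the middle step --- the one you yourself flag as the main obstacle --- is resolved incorrectly: the claim you make there is false. The tensor product $k_1\otimes k_2$ is \emph{not} a conormal distribution along $V_1\circ V_2$ in $U_1\circ U_2$, and it has no ``full symbol'' equal to the external product $a_1\boxtimes a_2$. Its singularities lie along $(V_1\times_M U_2)\cup(U_1\times_M V_2)$, which strictly contains $V_1\circ V_2$; it is a product-type (paired Lagrangian) distribution. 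The simplest counterexample is $P_2=\mathrm{Id}$: in the local model of Example 2 of Section~\ref{s:examples}, $k_1\otimes k_2=K_1(x,x'',y)\,\delta(x''-x')$, which is singular along all of $\{x''=x'\}$, not just along $\{x=x''=x'\}$. Relatedly, $a_1(v_1,\xi_1)a_2(v_2,\xi_2)$ is not a classical symbol on $N_1\oplus N_2$: its bihomogeneous components are singular on the whole set $\{\xi_1=0\}\cup\{\xi_2=0\}$ rather than only at the zero section, so it does not define an element of $S^{m_1+m_2}_{cl}$.

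What actually makes the theorem work is the step your sketch omits: one must push $k_1\otimes k_2$ forward along a submersion which is a morphism of bi-submersions from (a neighbourhood in) $U_1\circ U_2$ onto a bi-submersion of minimal dimension near its identity bisection (locally an identity bi-submersion, via \cite[Lemma 2.5]{Andr-Skandalis-II}), i.e.\ integrate out the intermediate variables. In the local model this is exactly $K(x,x',y)=\int K_1(x,x'',y)K_2(x'',x',y)\,dx''$, the classical composition of a smooth family of pseudodifferential operators in the leaf variables; a stationary-phase argument then shows the pushforward is conormal along the identity bisection of order $m_1+m_2$, with full symbol given by the usual asymptotic expansion $\sum_\alpha \partial_\xi^\alpha a_1\,D_x^\alpha a_2/\alpha!$, of which only the leading term is the product of the principal symbols. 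The multiplicativity $\sigma_{m_1+m_2}(P)=\sigma_{m_1}(P_1)\sigma_{m_2}(P_2)$ comes out of this fiber integration, not from restricting an external product of symbols to the diagonally embedded copy of $\mathcal F^*_x$ in $N_1^*\oplus N_2^*$.
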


\begin{theorem}
Given $P_i\in \Psi^{m_i}({\mathcal F})$, $i=1,2$, the commutator
$[P_1, P_2]$ is in $\Psi^{m_1+m_2-1}({\mathcal F})$.
\end{theorem}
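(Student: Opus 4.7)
The plan is to reduce the commutator assertion to the composition theorem \ref{t:comp} by showing that the top-order longitudinal principal symbol of $[P_1,P_2]$ vanishes.

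First, applying Theorem~\ref{t:comp} to both products, $P_1 P_2$ and $P_2 P_1$ lie in $\Psi^{m_1+m_2}(\mathcal F)$, and in each case the principal symbol is the pointwise product of the symbols of the factors. Since the longitudinal principal symbols take values in $C(\mathcal F^*\setminus 0)$ and pointwise multiplication of scalar-valued functions is commutative, we obtain
\[
\sigma_{m_1+m_2}(P_1P_2)=\sigma_{m_1}(P_1)\sigma_{m_2}(P_2)=\sigma_{m_2}(P_2)\sigma_{m_1}(P_1)=\sigma_{m_1+m_2}(P_2P_1).
\]
Subtracting gives $[P_1,P_2]\in \Psi^{m_1+m_2}(\mathcal F)$ with $\sigma_{m_1+m_2}([P_1,P_2])=0$.

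Second, I would invoke the kernel description of the longitudinal principal symbol map: an operator in $\Psi^m(\mathcal F)$ whose longitudinal principal symbol vanishes on $\mathcal F^*\setminus 0$ automatically lies in $\Psi^{m-1}(\mathcal F)$. This is the exact analogue of a familiar fact from the H\"ormander calculus and is part of the pseudodifferential calculus on singular foliations developed in \cite{Andr-Skandalis-II} alongside Theorem~\ref{t:comp}. Applying it to our commutator yields $[P_1,P_2]\in \Psi^{m_1+m_2-1}(\mathcal F)$.

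The main technical obstacle lies in this second step. Writing $[P_1,P_2]=\sum_i R_{U_i}(k_i)$ with $k_i\in \mathcal P^{m_1+m_2}_c(U_i,V_i;\Omega^{1/2})$, the fibrewise principal symbols $\tilde\sigma_{m_1+m_2}(R_{U_i}(k_i))$ are homogeneous functions on the full conormal bundles $N_i^*\setminus 0$, whereas the longitudinal principal symbol $\sigma_{m_1+m_2}$ captures only their restrictions to the subspaces $ds_v^*(\mathcal F^*_{s(v)})\subset N_v^*$ and sums them across the local representatives. Vanishing of the sum on $\mathcal F^*$ is therefore formally much weaker than vanishing of each $\tilde\sigma_{m_1+m_2}(R_{U_i}(k_i))$; justifying the drop in order relies on the nontrivial fact that the representatives $k_i$ may be readjusted by elements of $\mathcal P^{m_1+m_2-1}_c$ so that the full fibrewise symbols cancel, a statement that is encoded in the principal-symbol exact sequence in \cite{Andr-Skandalis-II}.
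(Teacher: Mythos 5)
Your first step is sound: applying Theorem~\ref{t:comp} to both orderings, subtracting, and using commutativity of pointwise multiplication in $C(\mathcal F^*\setminus 0)$ correctly gives $[P_1,P_2]\in\Psi^{m_1+m_2}(\mathcal F)$ with $\sigma_{m_1+m_2}([P_1,P_2])=0$. You also correctly identify where the real difficulty lies. The problem is that you stop there: the claim on which your second step rests --- that any $P\in\Psi^m(\mathcal F)$ with $\sigma_m(P)=0$ on $\mathcal F^*\setminus 0$ lies in $\Psi^{m-1}(\mathcal F)$ --- is not available as a black box, and you do not prove it. In the classical H\"ormander calculus the analogous fact is the exactness of $0\to\Psi^{m-1}\to\Psi^m\to S^m/S^{m-1}\to 0$ and is essentially free, because the full symbol and the principal symbol live on the same space. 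Here that is exactly what fails: the local symbol $\tilde\sigma_m(R_U(k))$ lives on $N^*\setminus 0$, the longitudinal symbol only records its restriction to $ds_v^*(\mathcal F_x^*)\subset N_v^*$, and the operator is a sum of local representatives on possibly different bi-submersions. Turning ``the longitudinal symbol vanishes'' into ``the kernels can be rewritten with full symbols of one order lower'' requires pushing each representative to a minimal bi-submersion where $N^*\cong\mathcal F^*$, tracking what integration along the fibers does to the full symbol, and showing the error terms are one order down --- which is precisely the content of the reduction machinery in \cite{Andr-Skandalis-II} that the paper's own proof says must be invoked and \emph{slightly modified} (the arguments of \cite[Theorem 3.15]{Andr-Skandalis-II} together with a modification of \cite[Proposition 1.10]{Andr-Skandalis-II}). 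In other words, the ``principal-symbol exact sequence'' you appeal to is at least as hard as the statement you are trying to prove; naming it does not close the gap, and the paper's decision to redo the composition computation rather than cite such an exact sequence is evidence that it is not available in the form you want.

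To repair the argument along your lines, you would need to prove the injectivity of $\sigma_m:\Psi^m(\mathcal F)/\Psi^{m-1}(\mathcal F)\to C(\mathcal F^*\setminus 0)$ directly --- i.e.\ show that every $R_U(k)$ with $k\in\mathcal P_c^m(U,V;\Omega^{1/2})$ can, modulo $\Psi^{m-1}(\mathcal F)$, be represented on an identity bi-submersion $\mathcal W$ (where $N^*$ is canonically $\mathcal F^*$), with full symbol on $\mathcal W$ equal to the pullback of the longitudinal symbol. The paper's cited route instead follows the composition proof directly: write $P_1P_2$ and $P_2P_1$ as kernels on the composite bi-submersions, reduce both to a common bi-submersion with an identity bisection, expand asymptotically, and observe that the leading terms cancel because the symbol product commutes. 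That route proves the commutator estimate without needing the abstract injectivity of $\sigma_m$, which is why it is the one the paper adopts. As it stands, your proposal records the correct strategy and the correct obstacle but leaves the decisive step unproved.
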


\begin{proof}
The proof of this theorem follows the arguments of \cite[Theorem
3.15]{Andr-Skandalis-II}, using a slight modification of the proof
of \cite[Proposition 1.10]{Andr-Skandalis-II}.
\end{proof}

An operator $P\in \Psi^{m}({\mathcal F})$ is said to be
longitudinally elliptic, if its longitudinal principal symbol
$\sigma_{m}(P)$ is invertible.

\begin{theorem}[cf. Theorem 4.2 in \cite{Andr-Skandalis-II}]\label{t:reg}
Given  a longitudinally elliptic operator $P\in \Psi^{m}({\mathcal
F})$, there is an operator $Q\in \Psi^{-m}({\mathcal F})$ such that
$1-P\circ Q$ and $1-Q\circ P$ are in $\Psi^{-\infty}({\mathcal F})$.
\end{theorem}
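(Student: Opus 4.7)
The plan is to construct a two-sided parametrix by the classical iterative procedure, adapted to the Androulidakis--Skandalis calculus. First I would build an initial parametrix $Q_0\in\Psi^{-m}(\mathcal F)$ whose longitudinal principal symbol equals $\sigma_m(P)^{-1}$. Since $\sigma_m(P)$ is nowhere vanishing and homogeneous of degree $m$ on $\mathcal F^*\setminus 0$, its pointwise inverse is a smooth function on $\mathcal F^*\setminus 0$, homogeneous of degree $-m$. To realize it as a principal symbol, I would cover $M$ by finitely many open sets $\Omega_j=s_j(V_j)$ coming from identity bi-submersions $(W_j,t_j,s_j)$ with identity bisections $V_j$, pull back the corresponding piece of $\sigma_m(P)^{-1}$ via the embedding $ds_v^*:\mathcal F^*_{s(v)}\hookrightarrow N^*_v$ and the canonical trivialization of $\Omega^1 N^*\otimes\Omega^{1/2}U|_V$ to obtain a genuine classical symbol of order $-m$ on $N^*$, form the corresponding element of $\mathcal P^{-m}_c(W_j,V_j;\Omega^{1/2})$, and sum over a subordinate partition of unity on $M$.

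Next, by Theorem~\ref{t:comp}, $PQ_0\in\Psi^0(\mathcal F)$ has principal symbol $\sigma_m(P)\sigma_{-m}(Q_0)=1$, so $R:=1-PQ_0\in\Psi^{-1}(\mathcal F)$. A formal Neumann series $Q_0(1+R+R^2+\cdots)$ would give a true right inverse modulo smoothing operators; to make sense of this inside our calculus, I would apply the usual asymptotic-summation trick to produce an operator $Q_r\in\Psi^{-m}(\mathcal F)$ with $Q_r-Q_0\sum_{k=0}^{N-1}R^k\in\Psi^{-m-N}(\mathcal F)$ for every $N$. Applying Theorem~\ref{t:comp} again gives $PQ_r-1\in\bigcap_{N}\Psi^{-N}(\mathcal F)=\Psi^{-\infty}(\mathcal F)$, i.e.\ a right parametrix. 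The symmetric construction, starting from $Q_0P-1\in\Psi^{-1}(\mathcal F)$, produces a left parametrix $Q_l\in\Psi^{-m}(\mathcal F)$; the standard identity $Q_l\equiv Q_l(PQ_r)\equiv (Q_lP)Q_r\equiv Q_r$ modulo $\Psi^{-\infty}(\mathcal F)$ then shows that $Q:=Q_r$ is a two-sided parametrix.

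The hard part I expect is the symbol-realization step: verifying that every smooth homogeneous function of degree $-m$ on $\mathcal F^*\setminus 0$ actually arises as $\sigma_{-m}(Q_0)$ for some $Q_0\in\Psi^{-m}(\mathcal F)$. This requires that symbols a priori defined only on the subspace $ds_v^*(\mathcal F^*_{s(v)})\subset N_v^*$ admit smooth extensions to full symbols on $N^*$ depending smoothly on the bisection parameter $v$ (which is always possible by a partition-of-unity extension, since $ds_v^*$ is a smoothly varying linear injection), and that the local pieces constructed on different bi-submersions patch into a globally defined operator via the invariance properties of $R_U(k)$ under morphisms of bi-submersions. Once this is in place, the asymptotic summation is comparatively routine, since each iterate $R^k$ can be represented as a finite sum of operators $R(k^{(k)}_i)$ supported in the same finite collection of bi-submersions used for $Q_0$, reducing the asymptotic sum to the classical Borel construction on each identity bi-submersion.
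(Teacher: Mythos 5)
The paper itself offers no proof of this statement: it is quoted (``cf.\ Theorem 4.2 in Androulidakis--Skandalis'') as part of the imported calculus, so there is nothing internal to compare against. Your overall architecture --- invert the principal symbol to get $Q_0$, observe $1-PQ_0\in\Psi^{-1}(\mathcal F)$, asymptotically sum the Neumann series to get a right parametrix, and identify left and right parametrices modulo $\Psi^{-\infty}(\mathcal F)$ --- is the standard one and is surely what the cited proof does. You also correctly locate the crux in the symbol-realization step. Note in passing that even the ``routine'' part silently uses that $\sigma_0(1-PQ_0)=0$ forces $1-PQ_0\in\Psi^{-1}(\mathcal F)$, i.e.\ exactness of the symbol sequence $0\to\Psi^{m-1}\to\Psi^m\to\text{(symbols)}\to 0$; this is a nontrivial ingredient of the Androulidakis--Skandalis calculus that is not stated in this paper and should be cited or proved.

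The genuine gap is in your dismissal of the hard step. You justify the extension of $\sigma_m(P)^{-1}$ from $ds_v^*(\mathcal F^*_{s(v)})$ to $N^*_v$ by saying that $ds_v^*$ is ``a smoothly varying linear injection,'' so a partition-of-unity extension works. But the images $ds_v^*(\mathcal F^*_{s(v)})\subset N^*_v$ do \emph{not} form a smooth subbundle: their dimension equals $\dim\mathcal F_{s(v)}$, which jumps (only upper semicontinuously) precisely because the foliation is singular --- the paper itself stresses that $\mathcal F^*$ is not a vector bundle. Extending a function off a family of subspaces of varying dimension by a partition of unity is not a well-defined operation, and smoothness of the would-be extension at points where the dimension drops is exactly the issue. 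The workable route is different: $\sigma_m(P)$ is by definition the restriction of the smooth local principal symbols $a_m\in C^\infty(N^*\setminus 0)$ to the cone $S=\bigsqcup_v ds_v^*(\mathcal F^*_{s(v)})\setminus 0$, so ellipticity says $a_m$ is nonvanishing on $S$; one then needs to check that $S$ meets the cosphere bundle in a closed set, invert $a_m$ on a conic neighbourhood of $S$, and cut off --- producing a genuine classical symbol of order $-m$ whose longitudinal symbol is $\sigma_m(P)^{-1}$. That closedness/neighbourhood argument, together with the compatibility of the local inverses under morphisms of bi-submersions, is the real content of the cited theorem, and your proposal does not supply it.
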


\begin{theorem}[cf. Theorem 5.3 in \cite{Andr-Skandalis-II}]\label{t:bounded}
Any operator $P\in \Psi^{0}({\mathcal F})$ defines a bounded
operator in $L^2(M)$.
\end{theorem}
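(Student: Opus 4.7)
My plan is to adapt the classical H\"ormander--Kohn--Nirenberg square-root trick to the pseudodifferential calculus of \cite{Andr-Skandalis-II}, reducing the $L^2$-boundedness of a general order-zero operator to that of regularizing operators. Set $M_0 > \sup |\sigma_0(P)|^2$, which is finite because $\sigma_0(P)$ is continuous and degree-zero homogeneous on $\mathcal F^* \setminus 0$ and $M$ is compact. Then $M_0 - |\sigma_0(P)|^2$ is strictly positive and admits a smooth positive square root bounded below by a positive constant.

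I would build an operator $Q \in \Psi^0(\mathcal F)$ with $\sigma_0(Q) = \sqrt{M_0 - |\sigma_0(P)|^2}$ such that $M_0 I - P^*P - Q^*Q \in \Psi^{-\infty}(\mathcal F)$ by the usual inductive construction. Start with $Q_0 \in \Psi^0(\mathcal F)$ having the prescribed principal symbol; existence follows from the local quantization of classical symbols on $V \times N^*$ inside a bi-submersion chart, glued by a partition of unity. By Theorem~\ref{t:comp} and the commutator theorem preceding it, $R_1 := M_0 I - P^*P - Q_0^* Q_0$ lies in $\Psi^{-1}(\mathcal F)$. Iteratively choose $Q_j \in \Psi^{-j}(\mathcal F)$ whose principal symbol solves the scalar equation $2\sigma_0(Q_0)\,\sigma_{-j}(Q_j) = -\sigma_{-j}(R_j)$, which is solvable thanks to the lower bound on $\sigma_0(Q_0)$, and form the new remainder $R_{j+1} \in \Psi^{-j-1}(\mathcal F)$ accordingly. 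An asymptotic summation $Q \sim \sum_j Q_j$ then yields $M_0 I - P^*P = Q^* Q + R$ with $R \in \Psi^{-\infty}(\mathcal F)$.

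Given this identity, it suffices to show that every regularizing operator is bounded on $L^2(M,\mu)$; indeed, pairing with $u$ and rearranging yields $\|Pu\|^2 \le (M_0 + \|R\|_{L^2 \to L^2})\|u\|^2$. For a bi-submersion $(U,t,s)$ and $k \in C^\infty_c(U, \Omega^{1/2}U)$, I would apply Schur's test to the formula $R_U(k)\xi(x) = \int_{t^{-1}(x)} (\rho^U \cdot k)(u)\, \xi(s(u))$: the quasi-invariance identity defining $\rho^U$ exactly equates the ``row'' and ``column'' integrals of the kernel in the two orderings, and compactness of $\mathrm{supp}\, k$ makes both finite and uniformly bounded.

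The main obstacle will be the globalization in the construction of $Q$. The longitudinal principal symbol lives on the non-manifold space $\mathcal F^*$, while the pseudodifferential calculus is defined locally through bi-submersions. Consequently, each symbolic correction $\sigma_{-j}(Q_j)$ must be computed on $\mathcal F^*$, lifted via the map $ds^*$ of \eqref{e:symbol} into each bi-submersion chart, quantized there, and patched by a partition of unity so that the remainders genuinely drop in order---a procedure legitimized by the well-definedness of the longitudinal principal symbol map and by the composition formula of Theorem~\ref{t:comp}, but requiring some care to verify at the symbolic and remainder levels simultaneously.
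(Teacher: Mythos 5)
The paper does not prove this statement itself---it is quoted from Androulidakis--Skandalis (Theorem 5.3 of \cite{Andr-Skandalis-II})---and your argument is essentially the proof given there, adapted to the concrete $L^2(M,\mu)$ representation: the H\"ormander square-root trick reducing $L^2$-boundedness of an order-zero operator to boundedness of regularizing operators, the latter obtained from the quasi-invariance identity for $\rho^U$ together with Schur's test. Your outline is correct, the one genuinely nontrivial input (which you correctly flag) being the surjectivity of the longitudinal principal symbol map needed to quantize $\sqrt{M_0-|\sigma_0(P)|^2}$ and the successive lower-order corrections within the bi-submersion calculus.
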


\subsection{Examples}\label{s:examples}
1. Suppose that $\mathcal F$ is a smooth foliation on a
compact manifold $M$. Then one can define a bi-submersion $(U,t,s)$ as follows. $U=G$ is the holonomy groupoid of $\mathcal F$ (assume that it
is Hausdorff) and $t, s : G\to M$ are the usual target and source
maps of $G$. (We refer the reader to \cite{survey,umn-survey} for basic notions of noncommutative geometry of foliations.) An identity bisection $V$ of this bi-submersion is given by the unit set $G^{(0)}\subset G$ of the groupoid $G$. The bundle $\Omega^{1/2}U$ is
the leafwise half-density bundle associated with a natural
$2p$-dimensional foliation ${\mathcal G}$ on $G$, and the space $C^\infty_c(U,\Omega^{1/2}U)$ is a basic element for constructing operator algebras associated with $\mathcal F$. Finally, 
the space $\mathcal P^m_c(U,V;\Omega^{1/2})$ coincides with the space of kernels of $G$-pseudodifferential operators introduced in \cite{Connes79}. 

2. As above, suppose that $\mathcal F$ is a smooth foliation on a
compact manifold $M$. Let $\phi: D\stackrel{\cong}{\to} I^p\times I^q$ and $\phi': D'\stackrel{\cong}{\to} I^p\times I^q$ be two compatible foliated charts on $M$ (here
$I=(0,1)$) and $W(\phi,\phi')\subset G \stackrel{\cong}{\to} I^p\times I^p\times I^q$ the corresponding coordinate chart on the holonomy groupoid $G$ \cite{Connes79} (see also \cite{survey,umn-survey}). Then we have 
a bi-submersion $(U,t,s)$, where $U=W(\phi,\phi')$ and $t : W(\phi,\phi') \to D$ and $s : W(\phi,\phi') \to D^\prime$ are the restrictions of the target and source maps of the holonomy groupoid $G$ to $W(\phi,\phi')$. In local coordinates, they are given by
\[
t(x,x^\prime,y)=(x,y),\quad s(x,x^\prime,y)=(x^\prime,y), \quad (x,x^\prime,y)\in I^p\times I^p\times I^q.
\]
In the charts $\phi$ and $\phi^\prime$, the positive smooth density $\mu$ can be written as $\mu=\mu(x,y)|dx||dy|$ and $\mu=\mu^\prime(x^\prime,y^\prime)|dx^\prime||dy^\prime|$, respectively. There are natural sections of the bundles $\Omega^{1/2}\ker ds$ and $\Omega^{1/2}\ker dt$, which can be written as $|dx|^{1/2}$ and $|dx^\prime|^{1/2}$, respectively. Then $\rho^U\in C^\infty(U,\Omega^{-1/2}\ker ds\otimes \Omega^{1/2}\ker dt)$ is given by
\[
\rho^U_{(x,x^\prime,y)}=\left(\frac{\mu^\prime(x^\prime,y)}{\mu(x,y)}\right)^{1/2}|dx|^{-1/2} |dx^\prime|^{1/2}, \quad (x,x^\prime,y)\in I^p\times I^p\times I^q.
\]
Any $k\in C^\infty_c(U,\Omega^{1/2}U)$ has the form
$k=K(x,x^\prime,y)|dx|^{1/2}|dx^\prime|^{1/2}$ with $K\in C^\infty_c(I^p\times I^p\times I^q)$, and the operator
$R_U(k) : C^\infty(D^\prime)\to C^\infty(D)$ is given
by
\[
R_U(k)f(x,y)=\int K(x,x^\prime,y)\left(\frac{\mu^\prime(x^\prime,y)}{\mu(x,y)}\right)^{1/2}
f(x^\prime,y)dx^\prime.
\]
In the case when $\phi=\phi^\prime$, a non-empty identity bisection $V\subset W(\phi,\phi)\cong I^p\times I^p\times I^q$ is given by
\[
V=\{(x,x^\prime,y) \in I^p\times I^p\times I^q : x=x^\prime\}\cong
I^p\times I^q\cong D.
\]
Then we have $N\cong I^p\times I^q\times {\mathbb R}^p$ and a diffeomorphism $\phi : U_1\subset U \to N$ can be taken in the form
\[
\phi : (x,x^\prime,y)\in I^p\times I^p\times I^q \mapsto (x,y,x^\prime-x)\in I^p\times I^q\times {\mathbb R}^p.
\]
Finally, a symbol $a\in S^m_{cl}(V,N^*)$ is written as $a=a(x,y,\xi)$, $(x,y,\xi)\in I^p\times I^q\times ({\mathbb R}^p)^*$, and the corresponding operator $P : C^\infty_c(D^\prime)\to C^\infty_c(D)$ is given by, for $f\in
C^\infty_c(D^\prime)\cong C^\infty_c(I^p\times I^q)$,
\begin{multline*}
Pf(x,y)\\ = (2\pi)^{-p}\int_{I^p}\int_{\mathbb R^p} e^{i\langle x-x^\prime,\xi\rangle}
\chi(x,x^\prime,y) a(x,y,\xi)f(x^\prime,y)\left(\frac{\mu^\prime(x^\prime,y)}{\mu(x,y)}\right)^{1/2}
|dx^\prime| |d\xi|.
\end{multline*}

3. Suppose that $\mathcal F$ is a singular foliation on a compact manifold $M$. We show that any vector field $X\in \mathcal F$ considered as a first order differential operator on $M$ belongs to $\Psi^1(\mathcal F)$, and its principal symbol $\sigma_1(X) \in C(\mathcal F^*\setminus 0)$ is
given by
\[
\sigma_1(X)(\xi)=i\langle \xi, X\rangle, \quad \xi \in \mathcal F^*.
\]

First, we consider an arbitrary bi-submersion $(U,t,s)$ and a nonempty identity
bisection $V\subset U$ and assume that $X\in \mathcal F$ is
supported in $s(V)$. Since $s:U\to M$ is a submersion, there exists
a vector field $\tilde X\in C^\infty (U, TU)$ such that $ds_u(\tilde
X(u))=X(s(u))$. Without loss of generality, we can assume that
$\tilde X\in C^\infty(U,\ker dt)$. Indeed, $\tilde X$ is defined up
to $C^\infty(U,\ker ds)$ and, by definition, $\tilde X\in
s^{-1}(\mathcal F)=C^\infty(U,\ker ds)+C^\infty(U,\ker dt)$.  Then
the restriction of $\tilde X$ to $V$ belongs to $C^\infty(V,(\ker
dt)\left|_V\right.)\cong C^\infty(V,N)$, giving rise to a vector
field $\tilde X\in C^\infty(V,N)$.

Put
\[
a_X(v,\xi)=i\langle \xi, \tilde X(v)\rangle, \quad v\in V,\quad
\xi\in N^*_v.
\]
The corresponding pseudodifferential operator $P :
C^\infty_c(s(U))\to C^\infty_c(t(U))$ is given for $f\in
C^\infty_c(s(U))$ by
\begin{multline*}
Pf(x) =(2\pi)^{-d}\int_{U^x}  \int_{N^*_{p\circ \phi(u)}}
e^{-i\langle \phi(u),\xi\rangle} \chi(u) i \langle \xi, \tilde
X(p\circ \phi(u))\rangle\times \\ \times f(s(u))\rho^U\cdot {\bf 1}, \quad x\in t(U).
\end{multline*}
Here $\bf 1$ is a smooth section of the bundle $\Omega^1N^*\otimes
\Omega^{1/2}U$, corresponding to its canonical trivialization.
Thus, $\rho^U\cdot {\bf 1}$ is a smooth section of $\Omega^1N^*\otimes
\Omega^{1}\ker dt$, which can be integrated over $N^*U_1$. 

Since $\tilde X\in C^\infty(U,\ker dt)$, it is tangent to $U^x$, and
we can use the formula
\[
\int_{U^x}(\tilde XF)\omega=-\int_{U^x}{\rm div}_\omega(\tilde
X)F\omega,
\]
that holds for any function $F\in C^\infty_c(U)$ and for any smooth
positive section $\omega\in C^\infty (U,\Omega^1\ker
dt)$. By this formula, we obtain that
\[
Pf(x)=(2\pi)^{-d}\int_{U^x}  \int_{N^*_{p\circ \phi(u)}}
e^{-i\langle \phi(u),\xi\rangle}(\tilde X+w)[\chi(u)
f(s(u))] \rho^U\cdot {\bf 1}
\]
with some $w\in C^\infty(U)$. Now, using the Fourier
transform inversion formula and observing that $\phi(u)=0
\Leftrightarrow u\in V \Leftrightarrow s_U(u)=x$, we get
\[
Pf(x)=(\tilde
X+w)[f(s(u))\left|_{u=s^{-1}(x)}\right.=Xf(x)+w(s^{-1}(x))f(x).
\]
Thus, we conclude that $X=P-(s^{-1})^*w$ has a kernel in $\mathcal
P^1(U,V,\Omega^{1/2})$.

Using \eqref{e:symbol}, we compute its longitudinal principal symbol:
\[
\sigma_1(X)(x,\xi)=a_X(v,ds_v^*(\xi))=i\langle \xi, ds_v(\tilde
X(v))\rangle=i\langle \xi, X(x)\rangle, \quad \xi \in \mathcal
F^*_x, s(v)=x.
\]

Now take a finite family $(U_\alpha,t_\alpha,s_\alpha),
\alpha=1,\ldots,d,$ of bi-submersions equipped with identity
bisections $V_\alpha\subset U_\alpha$ such that $M=\cup_{\alpha=1}^d
s(V_\alpha)$, a partition of unity $\phi_\alpha\in C^\infty(M), \alpha=1,\ldots,d,$ subordinate to the covering $\{s(V_\alpha)\}$, ${\rm supp}\,\phi_\alpha\subset s(V_\alpha)$, and a family of smooth functions $\psi_\alpha\in C^\infty(M),
\alpha=1,\ldots,d,$ such that ${\rm supp}\,\psi_\alpha\subset s(V_\alpha)$, $\phi_\alpha\psi_\alpha=\phi_\alpha$. Then we write
$X=\sum_{\alpha=1}^d \phi_\alpha X \psi_\alpha$ to see that $X$
belongs to $\Psi^1(\mathcal F)$.

\subsection{Longitudinal Sobolev spaces}

First, we observe that, for any $s\in \mathbb R$, there exists a
longitudinally elliptic operator $\Lambda_s$ of order $s$. To
construct such an operator, first we take, as above, a finite family
$(U_\alpha,t_\alpha,s_\alpha), \alpha=1,\ldots,d,$ of bi-submersions
equipped with identity bisections $V_\alpha\subset U_\alpha$ such
that $M=\cup_{\alpha=1}^d s(V_\alpha)$, a partition of unity
$\phi_\alpha\in C^\infty(M)$ subordinate to the covering of $M$,
${\rm supp}\,\phi_\alpha\subset s(V_\alpha)$, and $\psi_\alpha\in
C^\infty(M)$ such that ${\rm supp}\,\psi_\alpha\subset s(V_\alpha)$,
$\phi_\alpha\psi_\alpha=\phi_\alpha$. Then, for each $\alpha$, we
consider an operator $P_\alpha$ defined by a pseudodifferential
kernel $k_\alpha\in \mathcal P^s_c(U_\alpha,V_\alpha;\Omega^{1/2})$
with the symbol $a(x,\xi)=(1+|\xi|)^s$. Finally, we put
$\Lambda_s=\sum_{\alpha=1}^d \phi_\alpha P_\alpha \psi_\alpha$.

We fix such an operator $\Lambda_s$ for any $s$. Without loss of
generality, we can assume that $\Lambda_s$ is formally self-adjoint and
\[
\Lambda_s\circ \Lambda_{-s}=I+R_s, \quad \Lambda_{-s}\circ
\Lambda_{s}=I+R^\prime_s, \quad R_s, R^\prime_s\in \Psi^{-\infty}(\mathcal F).
\]

\begin{definition}
For $s\geq 0$, the Sobolev space $H^s(\mathcal F)$ is defined as the
domain of $\Lambda_s$ in $L^2(M)$:
\[
H^s(\mathcal F)=\{u\in L^2(M) : \Lambda_su\in L^2(M)\}.
\]
 The norm in $H^s(\mathcal F)$ is
defined by the formula
\[
\|u\|^2_{s}=\|\Lambda_su\|^2+\|u\|^2, \quad u\in H^s(\mathcal F).
\]

For $s<0$, $H^s(\mathcal F)$ is defined as the dual space of
$H^{-s}(\mathcal F)$.
\end{definition}

Using Theorems~\ref{t:comp} and~\ref{t:bounded}, we immediately get the following result.

\begin{theorem}\label{t:action_in_Sobolev}
For any $s\in \mathbb R$, an operator $A\in \Psi^m(\mathcal F)$
determines a bounded operator $A : H^s(\mathcal F)\to H^{s-m}(\mathcal F)$.
\end{theorem}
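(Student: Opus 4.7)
The plan is to reduce the $H^s\to H^{s-m}$ boundedness of $A\in \Psi^m(\mathcal F)$ to the $L^2\to L^2$ boundedness of an order-zero pseudodifferential operator, invoking Theorems~\ref{t:comp} and~\ref{t:bounded}. The key auxiliary object is
\[
B := \Lambda_{s-m}\circ A\circ \Lambda_{-s},
\]
which by Theorem~\ref{t:comp} has order $(s-m)+m+(-s)=0$, and is therefore $L^2$-bounded by Theorem~\ref{t:bounded}. The operators $\Lambda_s$ play the role of approximate isomorphisms $H^s(\mathcal F)\cong L^2(M,\mu)$, and the task is to transfer the $L^2$-bound on $B$ into the desired Sobolev bound on $A$, up to errors of order $-\infty$.

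\textbf{Main case.} For $u\in C^\infty(M)$, the parametrix identity $\Lambda_{-s}\circ \Lambda_s=I+R'_s$ with $R'_s\in \Psi^{-\infty}(\mathcal F)$ gives
\[
\Lambda_{s-m}Au \;=\; B(\Lambda_s u) \;-\; (\Lambda_{s-m}\,A\,R'_s)\,u.
\]
Since $\Lambda_{s-m}AR'_s\in \Psi^{-\infty}(\mathcal F)\subset \Psi^0(\mathcal F)$ is also $L^2$-bounded, one obtains $\|\Lambda_{s-m}Au\|\le C(\|\Lambda_s u\|+\|u\|)$. Consider first the case $s\ge 0$ and $s-m\ge 0$. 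By the definition of the norm $\|\cdot\|_s$, the right-hand side is at most $C\|u\|_s$. To control the remaining term in $\|Au\|_{s-m}^2=\|\Lambda_{s-m}Au\|^2+\|Au\|^2$, I would write analogously $Au=(A\Lambda_{-s})\Lambda_s u-(AR'_s)u$; since $m-s\le 0$, the operator $A\Lambda_{-s}\in \Psi^{m-s}(\mathcal F)\subset \Psi^0(\mathcal F)$ is $L^2$-bounded by Theorem~\ref{t:bounded}, and $AR'_s\in \Psi^{-\infty}(\mathcal F)$ is as well. Hence $\|Au\|_{s-m}\le C\|u\|_s$ on $C^\infty(M)$, and the estimate extends by density (Proposition~\ref{p:density_in_Sobolev}) to all of $H^s(\mathcal F)$.

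\textbf{Remaining cases and main obstacle.} The cases in which $s-m<0$ or $s<0$ are handled by duality, using $H^t(\mathcal F)=(H^{-t}(\mathcal F))^*$ for $t<0$. Concretely, one writes
\[
\|Au\|_{s-m}=\sup\bigl\{|(u,A^*w)|:w\in C^\infty(M),\ \|w\|_{m-s}\le 1\bigr\}
\]
and reduces the problem to the boundedness of $A^*\in \Psi^m(\mathcal F)$ between Sobolev spaces of larger (hence positive) indices, which falls under the previous case. The main obstacle is precisely this bookkeeping: the two-term definition of the Sobolev norm forces separate estimation of $\|\Lambda_{s-m}Au\|$ and $\|Au\|$, and the duality step requires that the class $\Psi^m(\mathcal F)$ be closed under formal adjoints---a general fact about conormal distributions consistent with the formula $R_U(k)^*=R_{U^{-1}}(k^*)$ already recorded for regularizing kernels, though not explicitly isolated in the excerpt. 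Once this is granted, the argument is a routine unfolding of the composition calculus.
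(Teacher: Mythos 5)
Your approach is the one the paper intends: the text preceding the theorem says the result follows ``immediately'' from Theorems~\ref{t:comp} and~\ref{t:bounded}, and your reduction via $B=\Lambda_{s-m}A\Lambda_{-s}\in\Psi^0(\mathcal F)$, together with the parametrix identity $\Lambda_{-s}\Lambda_s=I+R'_s$, is exactly the standard way to make that ``immediate'' precise. Your main case ($s\ge 0$ and $s-m\ge 0$) is carried out correctly, and your observation that the argument silently uses closure of $\Psi^m(\mathcal F)$ under formal adjoints is a legitimate and useful point.

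There is, however, a genuine gap in the way you dispatch the remaining cases. You claim that duality ``reduces the problem to the boundedness of $A^*\in\Psi^m(\mathcal F)$ between Sobolev spaces of larger (hence positive) indices.'' Dualizing $A:H^s\to H^{s-m}$ gives $A^*:H^{m-s}\to H^{-s}$. Both of the new indices $m-s$ and $-s$ are nonnegative only when $s\le 0$ \emph{and} $s\le m$; in other words, duality alone reduces the case ``both indices $\le 0$'' to ``both indices $\ge 0$.'' The \emph{mixed-sign} cases are not covered: if $0\le s<m$ then $s-m<0\le s$, and the dual indices $(m-s,-s)$ are again mixed; similarly if $m\le s<0$. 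So neither the direct estimate nor a single application of duality handles, say, an operator of order $3$ acting $H^1\to H^{-2}$. To close this gap one must combine duality with the parametrix identity \emph{before} estimating: write, for $u,w\in C^\infty(M)$ and $s\ge 0>s-m$,
\[
(Au,w)=(\Lambda_s u,(A\Lambda_{-s})^*w)-(AR'_s u,w),
\]
and note that $(A\Lambda_{-s})^*\in\Psi^{m-s}(\mathcal F)$ with $m-s>0$, so that the already-established nonnegative-index case gives $\|(A\Lambda_{-s})^*w\|\le C\|w\|_{m-s}$, while $AR'_s\in\Psi^{-\infty}(\mathcal F)$ is $L^2$-bounded; this yields $\|Au\|_{s-m}\le C\|u\|_s$. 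The symmetric manipulation handles $m\le s<0$. Without this extra step the case analysis in your proposal is incomplete. (Separately, you invoke Proposition~\ref{p:density_in_Sobolev} to pass from $C^\infty(M)$ to $H^s(\mathcal F)$, but that proposition is stated only for integer $s$; the paper itself is silent on how density for non-integer $s$ is obtained, so this is an inherited, not a new, imprecision.)
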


\begin{propos}\label{p:density_in_Sobolev}
For $s\in \mathbb Z$, the space $C^\infty(M)$ is dense in
$H^s(\mathcal F)$.
\end{propos}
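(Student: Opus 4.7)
The plan is to use the near-inverse relation $\Lambda_{-s}\circ\Lambda_s=I+R'_s$ with $R'_s\in\Psi^{-\infty}(\mathcal F)$, combined with the density of $C^\infty(M)$ in $L^2(M)$ and the mapping properties of longitudinal pseudodifferential operators from Theorem~\ref{t:action_in_Sobolev}. The argument splits into two cases depending on the sign of $s$.

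For $s\ge 0$, both $u$ and $\Lambda_s u$ lie in $L^2(M)$. I would choose $v_n,w_n\in C^\infty(M)$ with $v_n\to\Lambda_s u$ and $w_n\to u$ in $L^2(M)$ and set
\[
u_n=\Lambda_{-s}v_n-R'_s w_n.
\]
Because every operator in $\Psi^\bullet(\mathcal F)$ preserves $C^\infty(M)$, by construction of the local kernels $R_U(k)$ in Section~\ref{s:psi}, we have $u_n\in C^\infty(M)$. The decomposition
\[
u_n-u=\Lambda_{-s}(v_n-\Lambda_s u)-R'_s(w_n-u)
\]
shows $u_n\to u$ in $H^s(\mathcal F)$, since by Theorem~\ref{t:action_in_Sobolev} both $\Lambda_{-s}\in\Psi^{-s}(\mathcal F)$ and $R'_s\in\Psi^{-\infty}(\mathcal F)$ extend to bounded operators $L^2(M)\to H^s(\mathcal F)$.

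For $s<0$, $u$ need not lie in $L^2(M)$, so the second approximation must be modified. By Theorem~\ref{t:action_in_Sobolev}, $\Lambda_s u\in L^2(M)$ and also $R'_s u\in L^2(M)$, since $R'_s\in\Psi^{-\infty}$ maps $H^s(\mathcal F)$ into every $H^t(\mathcal F)$. I would choose $v_n,w_n\in C^\infty(M)$ with $v_n\to\Lambda_s u$ and $w_n\to R'_s u$ in $L^2(M)$ and set $u_n=\Lambda_{-s}v_n-w_n\in C^\infty(M)$. Then
\[
u_n-u=\Lambda_{-s}(v_n-\Lambda_s u)-(w_n-R'_s u)\to 0\quad\text{in }H^s(\mathcal F),
\]
using the boundedness $\Lambda_{-s}\colon L^2(M)\to H^s(\mathcal F)$ as before and the continuous inclusion $L^2(M)\hookrightarrow H^s(\mathcal F)$ for $s<0$, which is dual to $H^{-s}(\mathcal F)\hookrightarrow L^2(M)$.

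There is no serious obstacle beyond careful bookkeeping of mapping properties: the inputs needed are the factorization already built into the definition of $\Lambda_s$, the boundedness result Theorem~\ref{t:action_in_Sobolev}, and the elementary fact that the local kernels $R_U(k)$ preserve $C^\infty$. The hypothesis $s\in\mathbb Z$ in the proposition is not actually used in this argument, which works for any real $s$.
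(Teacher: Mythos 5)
Your argument is correct, but it is not the route the paper takes: the paper proves the proposition with the standard Friedrichs mollifiers on $M$ (the same device that reappears in the proof of Theorem~\ref{t:hypo}), i.e.\ a bounded family $J_\varepsilon\in\Psi^{-\infty}(\mathcal F)$ converging strongly to the identity with controlled commutators, whereas you invert $\Lambda_s$ up to a smoothing error and push the density of $C^\infty(M)$ in $L^2(M)$ through the parametrix. Your approach is more economical here: it uses only Theorem~\ref{t:action_in_Sobolev}, the relation $\Lambda_{-s}\Lambda_s=I+R'_s$ already built into the definition of the Sobolev scale, and the fact that operators in $\Psi^m(\mathcal F)$ preserve $C^\infty(M)$; and, as you observe, it yields the statement for every real $s$, not only integers. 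The mollifier approach buys something else, namely a single approximating family that works in all $H^s(\mathcal F)$ simultaneously together with the uniform commutator bounds that are needed anyway for the hypoellipticity argument. One point you should make explicit to avoid an appearance of circularity: the identity $u=\Lambda_{-s}\Lambda_su-R'_su$ is established on $C^\infty(M)$, and it cannot be extended to $u\in H^s(\mathcal F)$ ``by density,'' since that is precisely what is being proved. For $s\ge0$ it holds because $\Lambda_su$ is defined distributionally for $u\in L^2(M)$ (this is implicit in the very definition of $H^s(\mathcal F)$) and the composition identity passes to distributions via formal adjoints; for $s<0$ the actions of $\Lambda_s$, $\Lambda_{-s}$ and the smoothing remainder on $H^s(\mathcal F)=\bigl(H^{-s}(\mathcal F)\bigr)^*$ are defined by duality, so the identity is obtained by dualizing $\Lambda_s\Lambda_{-s}=I+R_s$ on $H^{-s}(\mathcal F)$, which replaces $R'_s$ by $R_s^*$ --- still an element of $\Psi^{-\infty}(\mathcal F)$, so your estimates are unaffected.
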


\begin{proof}
The proof can be easily given, using the standard Friedrichs' mollifiers on $M$ (see, for instance, \cite[Chapter II, \S 7]{Taylor}).
\end{proof}

\begin{theorem}
Any formally self-adjoint longitudinally elliptic operator $P\in
\Psi^{m}({\mathcal F})$, $m>0$, defines an unbounded self-adjoint
operator in the Hilbert space $L^2(M,\mu)$ with the domain $H^m(\mathcal F)$.
\end{theorem}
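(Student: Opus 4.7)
The plan is to use the abstract criterion that a symmetric operator $T$ on a Hilbert space is self-adjoint if and only if $\mathrm{Dom}(T^*)\subset\mathrm{Dom}(T)$, reducing the theorem to two ingredients: symmetry on $H^m(\mathcal F)$ and a longitudinal elliptic regularity statement for $P$.

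First I would establish \emph{symmetry} on $H^m(\mathcal F)$. Formal self-adjointness means $(Pu,v)=(u,Pv)$ for all $u,v\in C^\infty(M)$, and Proposition~\ref{p:density_in_Sobolev} together with the boundedness of $P\colon H^m(\mathcal F)\to L^2(M)$ provided by Theorem~\ref{t:action_in_Sobolev} (with $s=m$) lets me approximate arbitrary $u,v\in H^m(\mathcal F)$ by smooth functions and pass to the limit in both factors of the inner product. This gives $P\subset P^*$, so $\mathrm{Dom}(P^*)\supset H^m(\mathcal F)$ and $P^*|_{H^m(\mathcal F)}=P$.

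The technical heart of the proof is the \emph{elliptic regularity lemma}: if $v\in L^2(M)$ and $Pv=w\in L^2(M)$ in the sense of distributions, then $v\in H^m(\mathcal F)$. Since $P$ is longitudinally elliptic of order $m$, Theorem~\ref{t:reg} provides a parametrix $Q\in\Psi^{-m}(\mathcal F)$ with $QP=I+R'$, $R'\in\Psi^{-\infty}(\mathcal F)$. By Theorem~\ref{t:action_in_Sobolev}, all three operators act continuously on the whole Sobolev scale; in particular $P\colon L^2\to H^{-m}$, $Q\colon H^{-m}\to L^2$ and $R'\colon L^2\to H^m$ are bounded. Using density of $C^\infty(M)$ in $L^2$ (Proposition~\ref{p:density_in_Sobolev} with $s=0$), the pseudodifferential identity $QPu=u+R'u$ on $C^\infty(M)$ extends by continuity to $QPv=v+R'v$ for every $v\in L^2$. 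Under the additional hypothesis $Pv=w\in L^2$, the compatibility of the $Q$-actions on different Sobolev scales lets me rewrite $QPv=Qw$ and interpret it through $Q\colon L^2\to H^m$, yielding $Qw\in H^m(\mathcal F)$; combined with $R'v\in H^m(\mathcal F)$, this gives $v=Qw-R'v\in H^m(\mathcal F)$.

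Given these two ingredients, self-adjointness follows immediately. Take $v\in\mathrm{Dom}(P^*)$ with $P^*v=w\in L^2$. Testing $(Pu,v)=(u,w)$ against $u\in C^\infty(M)$ expresses $Pv=w$ as a distributional equation (equivalently, as an identity of $H^{-m}(\mathcal F)$-elements between the natural extension of $P$ and the embedded copy of $w$). The elliptic regularity lemma then forces $v\in H^m(\mathcal F)=\mathrm{Dom}(P)$, so $\mathrm{Dom}(P^*)\subset\mathrm{Dom}(P)$ and, together with the symmetry step, $P=P^*$. I expect the main obstacle to be the elliptic regularity step, specifically the careful bookkeeping needed to extend the operator identity $QP=I+R'$ from $C^\infty(M)$ to non-smooth elements and to reconcile the several ways $P$ acts (on $C^\infty$, on Sobolev spaces, and as a Hilbert-space adjoint); everything else is routine once that extension is in hand.
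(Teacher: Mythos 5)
The paper does not supply its own proof of this theorem; it closes the subsection with the remark that the results can be obtained from the general $C^*$-module results of Androulidakis--Skandalis applied to the representation of $C^*(M,\mathcal F)$ on $L^2(M,\mu)$. Your direct argument---symmetry on $H^m(\mathcal F)$ by density and boundedness, plus a left-parametrix regularity lemma to force $\mathrm{Dom}(P^*)\subset H^m(\mathcal F)$---is exactly the concrete argument that the cited abstract theory specializes to, and it is the argument the paper implicitly invokes when, in the proof of Theorem~\ref{mainthm}, it asserts $H^2(\mathcal F)\subset\mathrm{Dom}\,\overline{\Delta_H}$ citing the same Theorem~\ref{t:action_in_Sobolev} and Proposition~\ref{p:density_in_Sobolev}. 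Your proof is correct and has the advantage of being self-contained in the tools already developed (Theorems~\ref{t:action_in_Sobolev} and~\ref{t:reg}, Proposition~\ref{p:density_in_Sobolev}) rather than deferring to the external $C^*$-module framework. Two small points worth tightening: (i) Proposition~\ref{p:density_in_Sobolev} is stated only for $s\in\mathbb Z$, while the theorem allows any real $m>0$; for non-integer $m$ you need the density claim extended, which the paper asserts is routine (end of Section~3.3) but which should be cited rather than used silently. (ii) The ``compatibility'' you flag as the main obstacle is indeed the right thing to worry about, and the resolution is short: all the bounded extensions of $Q$ (as maps $H^{-m}\to L^2$, $L^2\to H^m$, etc.) agree on the common dense subspace $C^\infty(M)$ and hence agree wherever both are defined; the same goes for the two meanings of $Pv=w$ (distributional versus image under the bounded extension $P\colon L^2\to H^{-m}$). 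Once these are spelled out, the argument is complete.
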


Let us note that the results of this subsection can be obtained as consequences of the general results of \cite[Section 6]{Andr-Skandalis-II} applied to the natural representation of the $C^*$-algebra $C^*(M,\mathcal F)$ of the singular foliation $\mathcal F$ on $L^2(M,\mu)$. In particular, the Sobolev space $H^k(\mathcal F)$ is obtained as the image  of $L^2(M,\mu)$ by the action of the Sobolev module $H^k\subset C^*(M,\mathcal F)$ under this representation.

\section{Proofs of Theorems~\ref{t:Hs-hypo} and \ref{t:hypo}}\label{s:subelliptic}

The proof of Theorem~\ref{t:Hs-hypo} will closely follow Kohn's proof of the
subellipticity of the H{\"o}rmander's operators \cite{Kohn} (see
also \cite{Treves,Helffer-Nier}). We will keep notation of
Section~\ref{s:proofs}. The starting point is the following fact.

\begin{lemma}
For any $X\in C^\infty(M,H)$, there exists $C>0$ such that
\begin{equation}\label{Xestimate}
\|Xu\|^2 \leq C\left((\Delta_Hu,u)+\|u\|^2\right), \quad
u\in C^\infty(M).
\end{equation}
\end{lemma}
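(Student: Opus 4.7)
The plan is to reduce this to the pointwise Cauchy--Schwarz inequality combined with integration by parts, both of which are immediate from the definition of $\Delta_H$.

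First I would unwind the definition: since $\Delta_H=d_H^*d_H$ and $u\in C^\infty(M)$, integration by parts (which is genuine here, with no boundary contribution because $M$ is closed) gives
\[
(\Delta_Hu,u)_{L^2(M,\mu)} = (d_H^*d_Hu,u) = (d_Hu,d_Hu)_{L^2(M,H^*,\mu)}=\|d_Hu\|^2.
\]
Thus the right-hand side of \eqref{Xestimate} already dominates $\|d_Hu\|^2$, and it suffices to bound $\|Xu\|^2$ by $\|d_Hu\|^2$ up to a constant.

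Next I would observe that, by the very definition of $d_H$ as the restriction of $d$ to $H$, for each $x\in M$
\[
(Xu)(x) = (d_Hu)_x\bigl(X(x)\bigr),
\]
since $X(x)\in H_x$. The fiberwise inner product $g$ on $H$ induces a dual inner product $g^*$ on $H^*$ with respect to which the pairing $H^*\times H\to\mathbb R$ satisfies the Cauchy--Schwarz inequality
\[
|(d_Hu)_x(X(x))|^2 \leq |X(x)|_{g}^2\,|(d_Hu)_x|_{g^*}^2.
\]
Because $M$ is compact and $X\in C^\infty(M,H)$, the quantity $C:=\sup_{x\in M}|X(x)|_g^2$ is finite. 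Integrating the pointwise bound against $\mu$ yields
\[
\|Xu\|^2 \leq C\int_M |(d_Hu)_x|_{g^*}^2\,d\mu(x)=C\,\|d_Hu\|^2 = C(\Delta_Hu,u),
\]
which is actually stronger than \eqref{Xestimate}, the $\|u\|^2$ term on the right being unnecessary.

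There is no real obstacle: the lemma is essentially a restatement of the fact that $\Delta_H$ is constructed as $d_H^*d_H$ and that any $X\in C^\infty(M,H)$ has bounded pointwise length. The same argument works locally with $X=\sum_k a_kX_k$ in an orthonormal frame $X_1,\dots,X_p$ of $H$ if one prefers a coordinate computation, giving $Xu=\sum_k a_k(X_ku)$ and hence $\|Xu\|^2\leq p\,(\max_k\sup|a_k|^2)\sum_k\|X_ku\|^2$, where $\sum_k\|X_ku\|^2=(\Delta_Hu,u)$ by the local formula $\Delta_H|_\Omega=\sum X_k^*X_k$ for $u$ supported in a single chart; a partition of unity then handles the global case. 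Either route is routine.
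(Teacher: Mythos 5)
Your proof is correct, and it takes a genuinely different and in fact cleaner route than the paper's. The paper proves the local estimate on a chart $\Omega$ with an orthonormal frame $X_1,\dots,X_p$ (exactly as in the last paragraph of your proposal) and then globalizes via a finite cover, a partition of unity, and the observation that the commutators $[X,\varphi]$, $[d_H,\varphi]$ are zero-order and hence $L^2$-bounded; it is this gluing step that produces the extra $\|u\|^2$ term. You instead work globally from the start: the identity $(\Delta_Hu,u)=\|d_Hu\|^2$ is exact with no cutoffs, and the pointwise Cauchy--Schwarz inequality $|(d_Hu)_x(X(x))|^2\le |X(x)|_g^2\,|(d_Hu)_x|_{g^*}^2$ together with compactness of $M$ gives $\|Xu\|^2\le C\,(\Delta_Hu,u)$ directly. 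This dispenses with the partition of unity and commutator argument entirely, and as you note it yields the slightly sharper conclusion that the $\|u\|^2$ term on the right is not needed -- so your argument is both simpler and marginally stronger than the paper's, while of course implying the stated lemma.
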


\begin{proof}
Let $\Omega$ be an open subset of $M$ such that there exists a local
orthonormal frame $X_1,\ldots, X_p\in C^\infty(\bar{\Omega},
H\left|_{\bar\Omega}\right.)$. Then, for any $u\in
C^\infty_c(\Omega)$, we have
\[
(\Delta_Hu,u)=\|d_Hu\|_g^2=\sum_{j=1}^p\int_\Omega|X_ju(x)|^2\,d\mu(x).
\]
We can write $X(x)=\sum_{j=1}^pa_j(x)X_j(x), x\in \bar\Omega$ with some $a_j\in C^\infty(\bar\Omega), j=1,\ldots,p$. Therefore, for any $u\in C^\infty_c(\Omega)$, we get
\[
\|Xu\|^2=\int_\Omega |Xu(x)|^2\,d\mu(x) \leq C\sum_{j=1}^p\int_\Omega |X_ju(x)|^2\,d\mu(x) = C(\Delta_Hu,u).
\]

To prove the estimate \eqref{Xestimate} in the general case, we take a
finite open covering $M=\cup_{\alpha=1}^d \Omega_\alpha$ of $M$ such that, for any $\alpha=1,\ldots,d$, there exists
a local orthonormal frame $X^{(\alpha)}_1,\ldots, X^{(\alpha)}_p\in
C^\infty(\Omega_\alpha, H\left|_{\Omega_\alpha}\right.)$ and a
partition of unity subordinate to this covering, and use the fact
that, for any $\varphi\in C^\infty(M)$, the commutators
$[X,\varphi]$ and $[d_H,\varphi]$ are zero order differential
operators and, therefore, bounded in $L^2$.
\end{proof}

We start the proof of Theorem~\ref{t:Hs-hypo} with the case $s=0$.

\begin{propos}\label{t:L2-hypo}
There exist $\epsilon>0$ and $C>0$ such that
\[
\|u\|_\epsilon^2 \leq C\left(\|\Delta_Hu\|^2+\|u\|^2\right), \quad
u\in C^\infty(M).
\]
\end{propos}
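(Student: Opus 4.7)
The plan is to emulate Kohn's proof of subellipticity for H\"ormander sum-of-squares operators, with the Androulidakis--Skandalis calculus $\Psi^\bullet(\mathcal F)$ playing the role of the classical pseudodifferential calculus on $M$. Polarizing \eqref{Xestimate} via $(\Delta_H u,u)\leq \tfrac12(\|\Delta_H u\|^2+\|u\|^2)$ gives $\|Xu\|^2\leq C(\|\Delta_H u\|^2+\|u\|^2)$ for each $X\in C^\infty(M,H)$. For $\delta\in(0,1]$, I introduce the class $\mathcal S^\delta$ of smooth first-order differential operators $Y$ on $M$ satisfying $\|Yu\|_{\delta-1}^2\leq C_Y(\|\Delta_H u\|^2+\|u\|^2)$ for all $u\in C^\infty(M)$; thus $C^\infty(M,H)\subset\mathcal S^1$. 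The goal is to enlarge $\mathcal S^\epsilon$, for some $\epsilon>0$, to contain enough operators to dominate $\Lambda_\epsilon$.

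The key inductive step is the commutator lemma: if $Y_1,Y_2\in\mathcal S^\delta$, then $[Y_1,Y_2]\in\mathcal S^{\delta/2}$. Setting $Y=[Y_1,Y_2]$, I would expand
\[
\|Yu\|_{\delta/2-1}^2=(Yu,\Lambda_{\delta-2}Yu)=(Y_1Y_2u,\Lambda_{\delta-2}Yu)-(Y_2Y_1u,\Lambda_{\delta-2}Yu),
\]
integrate by parts to move one copy of $Y_i$ onto $u$, and bound the remaining compositions of pseudodifferential factors by means of Theorems~\ref{t:comp} and~\ref{t:bounded}; Cauchy--Schwarz followed by Young's inequality then closes the estimate, while residual commutators such as $[Y_i,\Lambda_{\delta-2}]$ drop by one unit in order and are absorbed by a bootstrap. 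Iterating, every vector field obtained as an iterated Lie bracket of elements of $C^\infty(M,H)$ of depth at most $r$ lies in $\mathcal S^{2^{1-r}}$.

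Since $M$ is compact and $\mathcal F$ is locally finitely generated, a finite family of iterated brackets $Y_1,\dots,Y_N$ of bounded depth $r$ generates $\mathcal F$ as a $C^\infty(M)$-module, so $\{Y_j(x)\}_{j=1}^N$ spans $\mathcal F_x$ at every $x\in M$. By Example~3 of Section~\ref{s:examples} and Theorem~\ref{t:comp}, the operator $P=\sum_j Y_j^*Y_j\in\Psi^2(\mathcal F)$ has longitudinal principal symbol $\sigma_2(P)(x,\xi)=\sum_j|\langle\xi,Y_j(x)\rangle|^2$, which is strictly positive on $\mathcal F^*\setminus 0$; hence $P$ is longitudinally elliptic and Theorem~\ref{t:reg} supplies a parametrix $Q\in\Psi^{-2}(\mathcal F)$, from which one deduces
\[
\|u\|_\epsilon^2\leq C\sum_j\|Y_j u\|_{\epsilon-1}^2+C\|u\|^2
\]
for any $\epsilon\in\mathbb R$. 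Taking $\epsilon=2^{1-r}$ and combining with the bracket estimates $Y_j\in\mathcal S^{2^{1-r}}$ yields the required inequality.

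The main technical obstacle is the commutator lemma: pseudodifferential operators in $\Psi^\bullet(\mathcal F)$ do not commute with arbitrary differential operators on $M$, and one must track the orders of the residual commutators carefully to ensure they can be absorbed into lower-order terms. Once this bookkeeping is in place, the rest of the argument is essentially formal, transcribing Kohn's classical computation into the calculus of Androulidakis--Skandalis.
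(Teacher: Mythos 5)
Your overall strategy coincides with the paper's: first-order subelliptic estimates for iterated brackets with a geometric loss of derivatives at each bracketing (Kohn's scheme), followed by longitudinal ellipticity of $\sum_j Y_j^*Y_j$ for a finite generating family of $\mathcal F$ and an application of the parametrix from Theorem~\ref{t:reg}. The paper phrases the induction in terms of an ideal $\mathcal P\subset\Psi^0(\mathcal F)$ of operators satisfying the estimate (properties (P0)--(P4)), whereas you phrase it in terms of classes $\mathcal S^\delta$ of first-order differential operators; these formulations are interchangeable, since $Y\in\mathcal S^\delta$ amounts to $Y\Lambda_{-1}\in\mathcal P$ with gain $\delta$ modulo lower-order commutators.

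There is, however, a genuine problem with your key inductive step as stated. You claim that $Y_1,Y_2\in\mathcal S^\delta$ implies $[Y_1,Y_2]\in\mathcal S^{\delta/2}$ for \emph{arbitrary} first-order operators in $\mathcal S^\delta$, and propose to prove it by symmetrically integrating one factor by parts onto $u$ in each of the two terms $(Y_1Y_2u,\Lambda_{\delta-2}Yu)$ and $(Y_2Y_1u,\Lambda_{\delta-2}Yu)$. This does not close: after moving, say, $Y_2^*$ across, you are left pairing $Y_1u$ against an operator of total order $\delta$ applied to $u$, and to absorb it you would need $\|Y_2 u\|$ in $L^2$ (or at least in $H^0$), whereas the hypothesis $Y_2\in\mathcal S^\delta$ only controls $\|Y_2u\|_{\delta-1}$ with $\delta-1<0$. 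The argument works only when one of the two factors is a horizontal field $X\in C^\infty(M,H)$, for which the lossless estimate \eqref{Xestimate} gives $\|Xu\|^2\leq C(\|\Delta_Hu\|^2+\|u\|^2)$ in $L^2$; this is exactly why the paper's property (P4) is formulated as $[X,P]\in\mathcal P$ for $X\in C^\infty(M,H)$ and $P\in\mathcal P$, not for two general elements. The weaker, asymmetric lemma ($X$ horizontal, $Y\in\mathcal S^\delta$ $\Rightarrow$ $[X,Y]\in\mathcal S^{\delta/2}$) still suffices for your conclusion, because $\mathcal F$ is generated as a $C^\infty(M)$-module by left-nested brackets $[X_1,[X_2,\ldots,[X_{r-1},X_r]\ldots]]$ of horizontal fields, so every bracket you need to estimate has a horizontal outermost factor. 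You should restate the lemma in that form and treat the two terms of the commutator asymmetrically: use \eqref{Xestimate} when $X$ lands on $u$, and the inductive hypothesis $\|Yu\|_{\delta-1}$ (together with $X^*=-X+c$ and the order bookkeeping for $[X,\Lambda_{\delta-2}Y]$, etc.) when $Y$ does. With that correction the rest of your argument matches the paper's proof.
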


\begin{proof}
Let $\mathcal P$ be the set of all operators $P\in \Psi^0(\mathcal
F)$ such that there exist constants $\epsilon>0$ and $C>0$ such that
\begin{equation}\label{mathcalP}
\|Pu\|_\epsilon^2 \leq C\left(\|\Delta_Hu\|^2+\|u\|^2\right), \quad u\in C^\infty(M).
\end{equation}

We claim that $\mathcal P$ satisfies the following properties:

\begin{description}
\item[(P0)]  $\bigcup_{m<0}\Psi^m(\mathcal F)$ is in $\mathcal P$;
\item[(P1)]  $\mathcal P$ is a two-sided ideal in $\Psi^0(\mathcal F)$;
\item[(P2)]  $\mathcal P$ is stable by taking the adjoints;
\item[(P3)] $X\Lambda_{-1}\in \mathcal P$ for $X\in C^\infty(M,H)$;
\item[(P4)]  If $P\in \mathcal P$ then $[X,P]\in \mathcal P$ for $X\in C^\infty(M,H)$.
\end{description}

\begin{proof}[Proof of (P2)]
First, observe that
\[
\|\Lambda_\epsilon P^*u\|^2=\langle P \Lambda^2_\epsilon P^*u, u\rangle=\|\Lambda_\epsilon Pu\|^2 + \langle (P \Lambda^2_\epsilon P^*-P^* \Lambda^2_\epsilon P)u, u\rangle.
\]
It remains to note that $P \Lambda^2_\epsilon P^*-P^* \Lambda^2_\epsilon P\in \Psi^0(\mathcal F)$ if $\epsilon<\frac 12$.
\end{proof}

\begin{proof}[Proof of (P1)]
First, observe that, by Theorem~\ref{t:action_in_Sobolev}, $\mathcal P$ is a left ideal. Then, by (P2), it is a right ideal as well.
\end{proof}

\begin{proof}[Proof of (P3)]
Using \eqref{Xestimate}, we have
\[
\|\Lambda_{-1} Xu\|_1^2\leq C \|X u\|^2\leq C_1 (\|\Delta_H u\|^2+\|u\|^2),
 \]
which means that $\Lambda_{-1} X\in \mathcal P$. Therefore, it follows from (P2) that $(\Lambda_{-1} X)^*=X^*\Lambda_{-1}\in \mathcal P$.

Since $X^*=-X+c$ with some $c\in C^\infty(M)$, using (P0) and (P1), we get
\[
X^*\Lambda_{-1}=-X\Lambda_{-1}+c\Lambda_{-1}\in \mathcal P.\qedhere
\]
\end{proof}

\begin{proof}[Proof of (P4)]
Take $P\in \Psi^0(\mathcal F)$ such that $P$ and $P^*$  satisfy
\eqref{mathcalP} with some $\epsilon>0$. For $\delta>0$, one can
write
\begin{multline}\label{e:10}
\|[X,P]u\|^2_\delta = ( [X,P]u, \Lambda^2_\delta [X,P]u)+\|[X,P]u\|^2\\
= (XPu, T_{2\delta}u)-(PXu, T_{2\delta}u)+\|[X,P]u\|^2,
 \end{multline}
 where $T_{2\delta}= \Lambda^2_\delta [X,P] \in  \Psi^{2\delta}(\mathcal F)$. For the second term in the right hand side of \eqref{e:10}, we have 
\begin{align*}
|(PXu, T_{2\delta}u)|  & = |(Xu, P^*T_{2\delta}u)|
\leq  \frac12(\|Xu\|^2+\|P^*T_{2\delta}u\|^2)\\
& \leq  \frac12 \|Xu\|^2+\|T_{2\delta}P^*u\|^2+\|[P^*,T_{2\delta}]u\|^2.
 \end{align*}
Assuming $\delta <\min (\frac 12, \frac{\epsilon}{2})$, we obtain that
\[
\|T_{2\delta}P^*u\|^2\leq C\|P^*u\|_{2\delta}^2\leq C\|P^*u\|_\epsilon^2 \leq C_1\left(\|\Delta_Hu\|^2+\|u\|^2\right)
\]
and
\[
\|[P^*,T_{2\delta}]u\|^2\leq C\|u\|^2,
\]
which proves the estimate
\begin{equation}\label{e:11}
|(PXu, T_{2\delta}u)| \leq C_1\left(\|\Delta_Hu\|^2+\|u\|^2\right).
\end{equation}
Similarly,  for the first term in the right hand side of \eqref{e:10}, we have 
\begin{multline*}
|(XPu, T_{2\delta}u)|  = |(Pu, X^*T_{2\delta}u)|\leq   |(Pu, XT_{2\delta}u)|+ |(Pu, cT_{2\delta}u)|\\
 \leq  |(Pu,T_{2\delta}Xu)|+|(Pu, [X,T_{2\delta}]u)| + |(Pu, cT_{2\delta}u)|.
 \end{multline*}
Now we proceed as follows, using $\delta < \frac{\epsilon}{2}$ and \eqref{Xestimate}:
\[
 |(Pu,T_{2\delta}Xu)|= |(T_{2\delta}^*Pu,Xu)|\leq C\|Pu\|_{2\delta}\|Xu\|\leq C_1\left(\|\Delta_Hu\|^2+\|u\|^2\right),
 \]
 \[
 |(Pu, cT_{2\delta}u)|=|(T_{2\delta}^*c^*Pu, u)|\leq C\|Pu\|_{2\delta}\|u\|\leq C_1\left(\|\Delta_Hu\|^2+\|u\|^2\right),
 \]
 and, finally,
\[
|(Pu, [X,T_{2\delta}]u)|=|([X,T_{2\delta}]^*Pu, u)|\leq C\|Pu\|_{2\delta}\|u\|\leq C_1\left(\|\Delta_Hu\|^2+\|u\|^2\right).
\]
We obtain that
\begin{equation}\label{e:12}
|(XPu, T_{2\delta}u)| \leq C_2\left(\|\Delta_Hu\|^2+\|u\|^2\right).
\end{equation}
Plugging \eqref{e:11} and \eqref{e:12} into \eqref{e:10}, we complete the proof of (P4).
\end{proof}

Now we complete the proof of Proposition~\ref{t:L2-hypo}.
First, we claim that, for any $X_1,\ldots,X_p\in C^\infty(M,H)$, the operator $[X_1,[X_2,\ldots, [X_{p-1},X_p]\ldots ]]\Lambda_{-1}$ belongs to $\mathcal P$.  We proceed by induction. Let us write
\[
[X_1,[X_2,\ldots, [X_{p-1},X_p]\ldots ]]=[X_1,Y], \quad Y=[X_2,\ldots, [X_{p-1},X_p]\ldots ],
\]
and assume that, by the induction hypothesis, $Y\Lambda_{-1}\in \mathcal P$. Then, by (P4), we know that $[X_{1},Y\Lambda_{-1}]\in \mathcal P$.
On the other hand, we can write
\[
[X_{1},Y\Lambda_{-1}]=[X_{1},Y]\Lambda_{-1}
+Y[X_{1},\Lambda_{-1}].
\]
Since $\Lambda_{-1}\Lambda_1=I+R_1$ with $R_1\in \Psi^{-\infty}(\mathcal F)$, we get
\[
Y[X_{1},\Lambda_{-1}]=Y\Lambda_{-1}\Lambda_1[X_{1},\Lambda_{-1}]-YR_1[X_{1},\Lambda_{-1}],
\]
that, by (P0) and (P1), immediately implies that $Y[X_{1},\Lambda_{-1}]\in \mathcal P$, since $Y\Lambda_{-1}\in \mathcal P$,  $\Lambda_1 [X_{1},\Lambda_{-1}]\in  \Psi^{0}(\mathcal F)$ and $YR_1[X_{1},\Lambda_{-1}]\in \Psi^{-\infty}(\mathcal F)$.
Thus we conclude that $[X_{1},Y]\Lambda_{-1}$ belongs to $\mathcal P$, that completes the proof.

By assumption, the $C^\infty(M)$-module $\mathcal F$ is generated by a finite set of vector fields $Y_1,\ldots,Y_N$ on $M$. Consider the operator $\Delta=\sum_{k=1}^NY^*_kY_k$, a Laplacian associated with $\mathcal F$. It is a formally self-adjoint, longitudinally elliptic, second order differential operator.
Let $Q\in \Psi^{-2}(\mathcal F)$ be its parametrix, i.e. $Q\Delta=I-K_1$, $\Delta Q=I-K_2$, $K_i\in \Psi^{-\infty}(\mathcal F)$. Then we have
\[
I=\sum_{j=1}^NQY^*_jY_j+K_1.
\]
Since $QY^*_j\in  \Psi^{-1}(\mathcal F)$, it follows from (P3) that $QY^*_jY_j\in \mathcal P$. By (P0), $K_1\in \mathcal P$. So we obtain that $I\in \mathcal P$, that completes the proof.
\end{proof}

Now we extend the subelliptic estimates of Proposition~\ref{t:L2-hypo} to an arbitrary $s$, completing the proof of Theorem~\ref{t:Hs-hypo}.

\begin{proof}[Proof of Theorem~\ref{t:Hs-hypo}]
By Proposition~\ref{t:L2-hypo}, we have
\[
\|u\|_{s+\epsilon}^2\leq c(\|\Lambda_s u\|_{\epsilon}^2+\|u\|_s^2) \leq C\left(\|\Delta_H \Lambda_s u\|^2+\|u\|_s^2\right).
\]
It remains to show that
\begin{equation}\label{e:DL}
\|\Delta_H \Lambda_s u\|^2\leq C^\prime_s\left(\|\Delta_Hu\|_s^2+\|u\|_s^2\right)
\end{equation}

\begin{lemma}\label{l:com}
The operator $[\Delta_H, \Lambda_s]$ can be represented in the form
\[
[\Delta_H, \Lambda_s]=\sum_{k=1}^N T^s_k X_k+T^s_0,
\]
where $X_k\in C^\infty(M,H), k=1,\ldots, N,$ and $T^s_k \in
\Psi^s(\mathcal F), k=0,\ldots, N$.
\end{lemma}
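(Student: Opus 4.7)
The plan is to reduce the computation to commutators of $\Lambda_s$ with first-order operators in $\Psi^1(\mathcal F)$, by first writing $\Delta_H$ globally as a quadratic expression in a finite family of $H$-valued vector fields, and then expanding the resulting commutator termwise with the Leibniz rule and the commutator theorem from Section~\ref{s:psi}.

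For the global decomposition, I would choose a finite open cover $\{\Omega_\alpha\}_{\alpha=1}^d$ of $M$ such that each $\Omega_\alpha$ carries a smooth orthonormal frame $X_1^{(\alpha)},\ldots,X_p^{(\alpha)}$ of $H|_{\Omega_\alpha}$, together with a subordinate partition of unity $\{\chi_\alpha^2\}$, $\chi_\alpha\in C^\infty_c(\Omega_\alpha)$. Setting $X_i:=\chi_\alpha X_k^{(\alpha)}$ for $i=(\alpha,k)$, $1\le i\le N:=dp$, and extending by zero outside $\Omega_\alpha$, produces a finite family in $C^\infty(M,H)$. Using the identity $(\chi_\alpha X_k^{(\alpha)})^*=\chi_\alpha(X_k^{(\alpha)})^*-X_k^{(\alpha)}(\chi_\alpha)$ and the relation $\sum_\alpha\chi_\alpha^2\equiv 1$ together with the local formula $\Delta_H|_{\Omega_\alpha}=\sum_k (X_k^{(\alpha)})^*X_k^{(\alpha)}$, a direct computation gives the global identity
\[
\Delta_H=\sum_{i=1}^N X_i^* X_i+\sum_{i=1}^N b_i X_i,\qquad b_i:=2X_k^{(\alpha)}(\chi_\alpha)\in C^\infty(M),
\]
which will serve as the starting point.

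Given this decomposition, I would expand $[\Delta_H,\Lambda_s]$ termwise. By Example~3 of Section~\ref{s:examples}, each $X_i\in\Psi^1(\mathcal F)$; since $X_i^*=-X_i+c_i$ with $c_i=-\mathrm{div}_\mu X_i\in C^\infty(M)$, also $X_i^*\in\Psi^1(\mathcal F)$, and similarly $b_iX_i\in\Psi^1(\mathcal F)$. The commutator theorem then places $[X_i,\Lambda_s]$, $[X_i^*,\Lambda_s]$, $[X_i,[X_i,\Lambda_s]]$, and $[b_iX_i,\Lambda_s]$ all in $\Psi^s(\mathcal F)$. Writing
\[
[X_i^* X_i,\Lambda_s]=X_i^*[X_i,\Lambda_s]+[X_i^*,\Lambda_s]X_i,
\]
the second summand is already of the form $T^s X_i$. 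In the first, substituting $X_i^*=-X_i+c_i$ and expanding $X_i[X_i,\Lambda_s]=[X_i,[X_i,\Lambda_s]]+[X_i,\Lambda_s]X_i$ shows that every bracket appearing lies in $\Psi^s(\mathcal F)$ and can be absorbed into $T^s_0$, while $-[X_i,\Lambda_s]X_i$ furnishes the desired $T^s_i X_i$ contribution. The remaining term $[b_iX_i,\Lambda_s]\in\Psi^s(\mathcal F)$ is absorbed directly into $T^s_0$. Summing over $i$ yields the required representation.

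The principal obstacle is the first step: producing a global quadratic expression for $\Delta_H$ in terms of \emph{globally defined} vector fields in $C^\infty(M,H)$, so that the subsequent commutator bookkeeping automatically produces only terms of the form $T^s X_i$ plus elements of $\Psi^s(\mathcal F)$. A secondary, purely technical point is that multiplication by a smooth function lies in $\Psi^0(\mathcal F)$ (take a symbol independent of $\xi$ in the $\mathcal P^0_c$ construction), which is needed to conclude $X_i^*\in\Psi^1(\mathcal F)$ and $[b_i,\Lambda_s]\in\Psi^{s-1}(\mathcal F)$; once these are in hand, the remainder of the argument is essentially mechanical.
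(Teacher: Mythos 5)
Your proposal is correct and follows essentially the same route as the paper's proof: a partition-of-unity, sum-of-squares decomposition of $\Delta_H$ into globally defined fields in $C^\infty(M,H)$, followed by a Leibniz expansion of the commutator using $[\Psi^1(\mathcal F),\Psi^s(\mathcal F)]\subset\Psi^s(\mathcal F)$ and the identity $X^*=-X+c$. The only difference is cosmetic: you use $\{\chi_\alpha^2\}$ to obtain the clean identity $\Delta_H=\sum_i X_i^*X_i+\sum_i b_iX_i$ with $N=dp$ fields before commuting, whereas the paper keeps the $\phi_\alpha,\psi_\alpha$ bookkeeping throughout and ends with $N=2dp$ fields.
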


\begin{proof}
Let $M=\bigcup_{\alpha=1}^d \Omega_\alpha$ be a finite open covering of
$M$ such that, for any $\alpha=1,\ldots,d$, there exists a local
orthonormal frame $X^{(\alpha)}_1,\ldots, X^{(\alpha)}_p\in
C^\infty(\Omega_\alpha, H\left|_{\Omega_\alpha}\right.)$. As mentioned above, the restriction of $\Delta_H$ to $\Omega_\alpha$ is written as
\[
\Delta_H\left|_{\Omega_\alpha}\right.=\sum_{j=1}^p(X^{(\alpha)}_j)^* X^{(\alpha)}_j.
\]
Let $\phi_\alpha\in C^\infty(M)$ be a partition of unity subordinate
to the covering, ${\rm supp}\,\phi_\alpha\subset U_\alpha$, and
$\psi_\alpha\in C^\infty(M)$ such that ${\rm
supp}\,\psi_\alpha\subset U_\alpha$,
$\phi_\alpha\psi_\alpha=\phi_\alpha$. Then we have
\begin{multline*}
\Delta_H=\sum_{\alpha=1}^d \phi_\alpha (\Delta_H\left|_{\Omega_\alpha}\right.)\psi_\alpha=\sum_{\alpha=1}^d\sum_{j=1}^p \phi_\alpha (X^{(\alpha)}_j)^* X^{(\alpha)}_j\psi_\alpha\\ =\sum_{\alpha=1}^d \sum_{j=1}^p \phi_\alpha (X^{(\alpha)}_j)^* \psi_\alpha X^{(\alpha)}_j + \sum_{\alpha=1}^d \sum_{j=1}^p \phi_\alpha (X^{(\alpha)}_j)^* [X^{(\alpha)}_j,\psi_\alpha].
\end{multline*}
We can write
\begin{align*}
\phi_\alpha (X^{(\alpha)}_j)^*\psi_\alpha X^{(\alpha)}_j\Lambda_s=& \phi_\alpha (X^{(\alpha)}_j)^*\Lambda_s \psi_\alpha X^{(\alpha)}_j+\phi_\alpha (X^{(\alpha)}_j)^*[\psi_\alpha X^{(\alpha)}_j,\Lambda_s]\\=& \Lambda_s \phi_\alpha (X^{(\alpha)}_j)^*\psi_\alpha X^{(\alpha)}_j+[\phi_\alpha (X^{(\alpha)}_j)^*,\Lambda_s] \psi_\alpha X^{(\alpha)}_j\\ &+[\psi_\alpha X^{(\alpha)}_j,\Lambda_s]\phi_\alpha (X^{(\alpha)}_j)^*+ [\phi_\alpha (X^{(\alpha)}_j)^*,[\psi_\alpha X^{(\alpha)}_j,\Lambda_s]].
\end{align*}
Since $(X^{(\alpha)}_j)^*=-X^{(\alpha)}_j+c^{(\alpha)}_j$ with some $c^{(\alpha)}_j\in C^\infty(M)$, we get
\[
\Delta_H\Lambda_s=\Lambda_s \Delta_H+\sum_{\alpha=1}^d \sum_{j=1}^p T^{s,(\alpha)}_{1,j} \psi_\alpha X^{(\alpha)}_j+ \sum_{\alpha=1}^d \sum_{j=1}^pT^{s,(\alpha)}_{2,j} \phi_\alpha X^{(\alpha)}_j+T^s_0,
\]
where the operators
\begin{align*}
T^{s,(\alpha)}_{1,j}=&[\phi_\alpha (X^{(\alpha)}_j)^*,\Lambda_s], \quad
T^{s,(\alpha)}_{2,j}=-[\psi_\alpha X^{(\alpha)}_j,\Lambda_s], \\
T^s_0=& \sum_{\alpha=1}^d \sum_{j=1}^p \Big( [\psi_\alpha X^{(\alpha)}_j,\Lambda_s]\phi_\alpha c^{(\alpha)}_j+[\phi_\alpha (X^{(\alpha)}_j)^*,[\psi_\alpha X^{(\alpha)}_j,\Lambda_s]]\\ & +[\phi_\alpha (X^{(\alpha)}_j)^* [X^{(\alpha)}_j,\psi_\alpha],\Lambda_s]\Big)
\end{align*}
belong to $\Psi^s(\mathcal F)$. Setting $\{X_k, k=1,\ldots, N\}=\{\psi_\alpha X^{(\alpha)}_j, \phi_\alpha X^{(\alpha)}_j, \alpha=1,\ldots,d, j=1,\ldots,p\}$ with $N=2dp$, we complete the proof.
\end{proof}

By Lemma~\ref{l:com}, it follows that there exists $C>0$ such that
\begin{equation}\label{e:0}
\|\Delta_H\Lambda_su\|^2\leq C(\|\Delta_Hu\|_s^2+\sum_{k=1}^N
\|X_ku\|_s^2+\|u\|^2_s), \quad u\in C^\infty(M).
\end{equation}
For any $k$, we have
\begin{multline}\label{e:1}
\|X_ku\|_s^2= \|\Lambda_sX_ku\|^2+ \|X_ku\|^2\leq
\|X_k\Lambda_s u\|^2+\|[\Lambda_s,X_k] u\|^2+ \|X_ku\|^2\\ \leq
\|X_k \Lambda_s u\|^2+ (\Delta_Hu,u)+C\| u\|_s^2.
\end{multline}
Next, by \eqref{Xestimate}, it follows that
\begin{equation}\label{e:2}
\begin{aligned}
\|X_k\Lambda_s u\|^2\leq & C( (\Delta_H\Lambda_su,\Lambda_su)+ \|u\|^2_s)\\
=&C((\Delta_Hu,u)_s+([\Delta_H,\Lambda_s]u,\Lambda_su)+\|u\|^2_s)\\
= & C((\Delta_Hu,u)_s+((\sum_{k=1}^N T^s_k
X_k+T^s_0)u,\Lambda_su)+\|u\|^2_s)\\ \leq &
C_1(\|\Delta_Hu\|^2_s+\sum_{k=1}^N \|X_ku\|_s \|u\|_s+\|u\|^2_s)\\
\leq & \epsilon \sum_{k=1}^N \|X_ku\|^2_s
+C_2(\epsilon)(\|\Delta_Hu\|^2_s+\|u\|^2_s)
\end{aligned}
\end{equation}
for any $\epsilon>0$ with some $C_2(\epsilon)>0$. From \eqref{e:1} and \eqref{e:2}, we
immediately get
\begin{equation}\label{e:3}
\sum_{k=1}^N  \|X_ku\|_s^2\leq C(\|\Delta_Hu\|_s^2+\|u\|^2_s).
\end{equation}
Plugging \eqref{e:3} into \eqref{e:0}, we get \eqref{e:DL}.
\end{proof}

\begin{proof}[Proof of Theorem~\ref{t:hypo}]
Following the standard construction of Friedrichs' mollifiers (see, for instance, \cite[Chapter II, \S 7]{Taylor} or \cite[Chapter II, \S 4]{Treves}), one can construct a bounded family $J_\varepsilon, 0<\varepsilon \leq 1,$ of operators from $\Psi^{-\infty}(\mathcal F)$ such that $J_\varepsilon u\to u$ in $L^2(M)$ as $\varepsilon\to 0$ for any $u\in L^2(M)$ and, for any $A\in \Psi^m(\mathcal F)$, the commutators $[A,J_\varepsilon]\in \Psi^{-\infty}(\mathcal F), 0<\varepsilon \leq 1,$ form a bounded family of operators in $\Psi^{m-1}(\mathcal F)$. More precisely, we first construct such a family  locally. Let $(U,t,s)$ be a bi-submersion and $V\subset U$ the identity bisection. In notation of Section~\ref{s:psi}, take a function $\rho\in C^\infty(N^*)$ supported in a tubular neighborhood $\phi(U_1)$ in $N^*$ such that $\rho\left|_V\right.\equiv 1$. One can check that the operator family $J_\varepsilon, 0<\varepsilon \leq 1,$ where the operator $J_\varepsilon=R_U(k_\varepsilon)$ is defined by the pseudodifferential kernel $k_\varepsilon\in \mathcal P^{-\infty}_c(U,V; \Omega^{1/2})$ with $k_{\varepsilon,0}=0$ and $a_\varepsilon(v,\eta)=\rho(v,\varepsilon\eta), v\in V, \eta\in N_v$, satisfies the desired conditions. The globally defined operator family $J_\varepsilon\in \Psi^{-\infty}(\mathcal F), 0<\varepsilon \leq 1$ is obtained from such families constructed locally by the usual gluing procedure (see, for instance, Example 3 of Section~\ref{s:examples}).

As an easy consequence, one get that, for any $s\in\mathbb R$, $J_\varepsilon u\to u$ in $H^s(\mathcal F)$ as $\varepsilon\to 0$ for any $u\in H^s(\mathcal F)$ and, for any $A\in \Psi^m(\mathcal F)$ and $B\in \Psi^{m^\prime}(\mathcal F)$, the operators $[B,[A,J_\varepsilon]]\in \Psi^{-\infty}(\mathcal F), 0<\varepsilon \leq 1,$ form a bounded family of operators in $\Psi^{m+m^\prime-2}(\mathcal F)$. Then one can easily complete the proof of the theorem, proceeding, for instance, as in the proof of \cite[Chapter II, Lemma 5.3]{Treves}.
\end{proof}

\end{document}